\newtheorem{theorem}{Theorem}[section]
\newtheorem{proposition}[theorem]{Proposition}
\newtheorem{corollary}[theorem]{Corollary}
\newtheorem{remark}[theorem]{Remark}
\newtheorem{definition}[theorem]{Definition}
\newcommand{\N}{\mathbb{N}}
\newcommand{\Q}{\mathbb{Q}}
\newcommand{\R}{\mathbb{R}}
\newcommand{\C}{\mathbb{C}}
\newcommand{\T}{\mathbb{T}}
\newcommand{\eps}{\varepsilon}
\newcommand{\uu}[1]{\underline{#1}}
\newcommand{\wt}[1]{\widetilde{#1}}
\newcommand{\tp}{\mathrm{tp}}
\DeclareMathOperator{\tr}{tr}
\newcommand{\Tt}{\mathcal{T}}
\newcommand{\St}{\mathcal{S}}
\newcommand{\wh}[1]{\widehat{#1}}
\DeclareMathOperator{\da}{-}
\DeclareMathOperator{\vN}{vN}
\newcommand{\ff}{\mathrm{f}}
\newcommand{\fff}{\mathrm{II_1}}
\title{A proof-theoretic metatheorem for tracial von Neumann algebras}
\author{Liviu P\u aunescu${}^{b}$ and Andrei Sipo\c s${}^{a,b,c}$\\[2mm]
\footnotesize ${}^a$Research Center for Logic, Optimization and Security (LOS), Department of Computer Science,\\
\footnotesize Faculty of Mathematics and Computer Science, University of Bucharest,\\
\footnotesize Academiei 14, 010014 Bucharest, Romania\\[1mm]
\footnotesize ${}^b$Simion Stoilow Institute of Mathematics of the Romanian Academy,\\
\footnotesize Calea Grivi\c tei 21, 010702 Bucharest, Romania\\[1mm]
\footnotesize ${}^c$Institute for Logic and Data Science,\\
\footnotesize Popa Tatu 18, 010805 Bucharest, Romania\\[2mm]
\footnotesize E-mails: liviu.paunescu@imar.ro, andrei.sipos@fmi.unibuc.ro\\
}
\date{}
\begin{document}

\maketitle

\begin{abstract}
We adapt a continuous logic axiomatization of tracial von Neumann algebras due to Farah, Hart and Sherman in order to prove a metatheorem for this class of structures in the style of proof mining, a research program that aims to obtain the hidden computational content of ordinary mathematical proofs using tools from proof theory.

\noindent {\em Mathematics Subject Classification 2020}: 03F10, 46E30, 46L10.

\noindent {\em Keywords:} Tracial von Neumann algebras, $C^*$-algebras, proof mining, metatheorems.
\end{abstract}

\section{Introduction}

One manner in which mathematical logic may have something to say about structures prevalent in e.g. functional analysis arises from the research area of {\it proof mining}, which aims to apply proof-theoretic methods and tools to ordinary mathematical proofs in order to uncover some additional information which is most often of a quantitative (computational) nature. The program was first suggested by Georg Kreisel in the 1950s \cite{Kre51,Kre52} under the name `unwinding of proofs', and given maturity, under its current name (due to Dana Scott), by the school of Ulrich Kohlenbach, starting in the 1990s -- for detailed references, see e.g. \cite{Koh08,Koh19}.

The theoretical basis of proof mining consists of several `general logical metatheorems', first introduced in \cite{Koh05, GerKoh08}, that are vastly expanded versions (in the sense that they are able to deal with abstract structures in addition to the usual higher-order arithmetical reasoning) of functional interpretations (most notably G\"odel's {\it Dialectica} interpretation \cite{God58}), building up on Kohlenbach's 1990s work on {\it monotone functional interpretations} \cite{Koh96}. From time to time, advances in proof mining required various extensions of the metatheorems whose proofs could require intricate arguments, see e.g. \cite{Leu06, Leu14, GunKoh16, Sip17, KohNic17, Sip19, Pis22} (as well as the recasting \cite{EngFer20} of the metatheorems in the framework of the bounded functional intepretation due to Ferreira and Oliva \cite{FerOli05}).

Such an extension was presented in the paper \cite{GunKoh16}, and its motivation was as follows: since there exists also a well-established model-theoretic link between mathematical logic and (functional) analysis, namely through the so-called {\it logics for metric structures}, including e.g. continuous logic and positive-bounded logic, in which classes of structures may be given appropriate axiomatizations, the question arises whether these axiomatizations could be translated into proof-theoretic logical systems that could admit logical metatheorems. Such a translation for positive-bounded formulas was presented and shown to work by G\"unzel and Kohlenbach in the paper \cite{GunKoh16} mentioned above. This work was continued in \cite{Sip19} where a positive-bounded axiomatization and a corresponding logical metatheorem were given for the class of $L^p$ Banach spaces. In particular, the present paper could be construed as a loose sequel to \cite{GunKoh16} and \cite{Sip19}.

Specifically, our goal is to state and prove a general logical metatheorem for tracial (complex) von Neumann algebras, a class of structures of great importance in the fields of operator algebras, ergodic theory, geometric group theory (including sofic groups), probability theory etc. The start of our investigations is a model-theoretic axiomatization of it due to Farah, Hart and Sherman \cite{FarHarShe14}; we note that this axiomatization is closely connected to the recent claimed negative solution to the Connes Embedding Problem via the complexity-theoretic statement of $\mathsf{MIP^*=RE}$ \cite{jimip}, namely model theory can provide a `shortcut' (see \cite[Section 7]{Gol21} and \cite[Subsection 3.3]{Gol22}) in showing that $\mathsf{MIP^*=RE}$ implies the failure of the CEP).

Generally, proof-theoretic frameworks were considered to be less restrictive than model-theoretic ones (traditionally, the main advantage was considered to be the ability to deal with weaker forms of extensionality). For example, what we can do in our treatment is to add the algebra norm $\|\cdot\|_\infty$ as an explicit symbol, unlike in \cite{FarHarShe14} where it was present only through the so-called `domains of quantification'. On the other hand, the main difficulty that arose in our work was of a different nature: it was the fact that complex numbers had never been considered before in proof mining, and unlike the norm or distance functions which only take non-negative values, the (real and imaginary) components of the trace may take arbitrary real values, and thus a new representation function $(\cdot)_+$ (see Subsection~\ref{sec:models}) had to be devised.

A particular novelty of the present paper is that we shall use the style of presenting metatheorems originating in the upcoming work \cite{CheLeuSipXX}, this meaning primarily that we use logical systems of proof-theoretic strength around first-order arithmetic, and that the extension to higher types of Kohlenbach's monotone functional interpretation is made syntactically explicit.

Section~\ref{sec:tr} will present all the necessary preliminary facts on tracial von Neumann algebras, in a classical, purely analytical framework. Section~\ref{sec:sys} will show how the theory of von Neumann algebras may be encoded as a logical system (actually a series of such systems), which is also given a standard set-theoretic semantics. Section~\ref{sec:metath} will present the proof of our main theorem, the general logical metatheorem for tracial von Neumann algebras -- Theorem~\ref{metric-cl}. In Section~\ref{sec:conc} we shall suggest avenues for future work, including potential applications.

\section{Tracial von Neumann algebras}\label{sec:tr}

We shall work here over the field of the complex numbers, i.e. by vector spaces (or algebras) we shall mean $\C$-vector spaces (resp. $\C$-algebras).

\begin{definition}
An {\bf algebra} is a vector space $A$, where we denote addition by $+$ and scalar multiplication by $\cdot$, equipped with a multiplication operation, denoted also by $\cdot$, such that $(A,+,\cdot)$ is a ring and for any $\lambda \in\C$ and $x$, $y \in A$, we have that
$$\lambda \cdot (x \cdot y) = (\lambda \cdot x) \cdot y = x \cdot (\lambda \cdot y).$$
\end{definition}

From now on, we shall omit the $\cdot$ sign(s).

\begin{definition}
A {\bf Banach algebra} is an algebra $A$ which is also a Banach space with norm $\|\cdot\|_\infty$, such that for every $x$, $y \in A$ we have that
$$\|x  y\|_\infty \leq \|x\|_\infty \|y\|_\infty.$$
\end{definition}

\begin{definition}
A {\bf $C^*$-algebra} is a Banach algebra $A$ equipped with a map $(\cdot)^* : A \to A$ such that for any $\lambda \in\C$ and $x$, $y \in A$, we have that:
\begin{enumerate}[(i)]
\item $x^{**}=x$;
\item $(x+y)^* = x^*+y^*$;
\item $(xy)^* = y^*x^*$;
\item $(\lambda x)^* = \overline{\lambda} x^*$;
\item $\|x^*x\|_\infty = \|x\|_\infty \|x^*\|_\infty$.
\end{enumerate}
\end{definition}

\begin{remark}
\begin{enumerate}
\item  Let $A$ be a $C^*$-algebra. Then:
\begin{enumerate}
\item we have that $0^*=0$ and $1^*=1$;
\item for any $x \in A$, we have that $\|x^*\|_\infty=\|x\|_\infty$ and thus that $\|xx^*\|_\infty = \|x\|_\infty^2$.
\end{enumerate}
\item When listing all the axioms of $C^*$-algebras (as in, e.g., \cite[p. 485]{FarHarShe14}), one can omit the right distributivity axiom, as one has that for any $x$, $y$, $z \in A$,
\begin{align*}
(x+y)z &= ((x+y)z)^{**} \\
&= (z^*(x+y)^*)^*= (z^*x^* + z^*y^*)^* = (z^*x^*)^* + (z^*y^*)^* = x^{**}z^{**} + y^{**}z^{**} = xz+yz.
\end{align*}
\end{enumerate}
\end{remark}

A {\bf predual} of a $C^*$-algebra $A$ is a pair $(V,i)$ where $V$ is a Banach space and $i$ is an isomorphism between the continuous dual of $V$ and $A$. Morphisms between preduals of a single algebra are defined in the natural way, by diagram commutativity. It is a theorem (due to Sakai \cite{Sak56}) that a $C^*$-algebra has at most one predual up to isomorphism, and if that is the case, we call the algebra a {\bf von Neumann algebra}.

\begin{definition}
A {\bf tracial von Neumann algebra} is a von Neumann algebra $A$ equipped with a $\C$-linear functional $\tr : A \to \C$, called its {\bf trace}, that in addition satisfies the following properties:
\begin{itemize}
\item for every $x \in A$, $\tr(x^*)=\overline{\tr(x)}$;
\item for all $x \in A$, $\tr(x^*x) \geq 0$ (with equality iff $x=0$);
\item $\tr(1)=1$;
\item $\tr$ commutes with limits of increasing nets in $A$;
\item for all $x$, $y \in A$, $\tr(xy)=\tr(yx)$.
\end{itemize}
\end{definition}

Generally, if $A$ is a tracial von Neumann algebra with trace $\tr: A \to \C$, we denote, for any $x \in A$, $\|x\|_2 := \sqrt{\tr(x^*x)}$. We have that $\|\cdot\|_2$ is a norm on $A$ considered as a vector space, and that for every $x$, $y \in A$, $\|xy\|_2 \leq \|x\|_\infty \|y\|_2$. In particular, for every $x \in A$, $\|x\|_2 \leq \|x\|_\infty$.

Examples of tracial von Neumann algebras include finite-dimensional matrix algebras $\mathcal{M}_n(\C)$, with the usual matrix trace, as well as those of the form $L^\infty(\Omega,\mathcal{F},\mu)$, where the trace is given by
$$[f] \mapsto \int_\Omega f d\mu.$$
It is known that a tracial von Neumann algebra is commutative iff it is of this latter form.

We also have the following property.

\begin{proposition}
Let $A$ be a tracial von Neumann algebra and $x$, $y \in A$. Then $|\tr(xy)| \leq \|x\|_2 \|y\|_2$.
\end{proposition}

\begin{proof}
First, we remark that it is enough to assume that $\tr(xy) \in \R_+$, since we could, otherwise, take $\mu \in \C$ with $|\mu|=1$ such that $\mu\tr(xy)=\tr(\mu xy) \in \R_+$ and derive that
$$|\tr(xy)|  =|\mu \tr(xy)|=|\tr(\mu xy)| \leq \|\mu x\|_2\|y\|_2 = \|x\|_2\|y\|_2.$$
With that in mind, we also note that
$$\tr(y^*x^*) = \tr((xy)^*) = \overline{\tr(xy)} = \tr(xy).$$
Now, let $f : \R \to \R$, defined, for any $\lambda \in \R$, by
$$f(\lambda) := \tr((x^*+\lambda y)^*(x^*+\lambda y)) \geq 0.$$
We remark, using our assumption from before, that, for any $\lambda \in \R$,
$$f(\lambda) = \|y\|_2^2 \lambda^2 + 2\tr(xy)\lambda + \|x\|_2^2,$$
so $f$ is a quadratic function whose discriminant is $4\tr^2(xy) - 4\|x\|_2^2\|y\|_2^2$, and, since $f$ is always non-negative, we have that said discriminant must be smaller or equal to $0$, from which we derive our conclusion.
\end{proof}

\begin{corollary}
Let $A$ be a tracial von Neumann algebra and $x \in A$. Then $|\tr(x)| \leq \|x\|_2$.
\end{corollary}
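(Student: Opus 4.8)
The plan is to obtain this as an immediate specialization of the preceding Proposition by taking $y := 1$. First I would note that, since $1$ is the multiplicative identity of $A$, we have $x \cdot 1 = x$, and therefore $\tr(x) = \tr(x \cdot 1)$. Next, I would compute the $\|\cdot\|_2$-norm of the unit: using that $1^* = 1$, as recorded in the Remark above, together with the normalization $\tr(1) = 1$ from the definition of the trace, one gets
$$\|1\|_2 = \sqrt{\tr(1^* \cdot 1)} = \sqrt{\tr(1)} = 1.$$
Applying the Proposition to the pair $x$ and $y = 1$ then yields
$$|\tr(x)| = |\tr(x \cdot 1)| \leq \|x\|_2 \|1\|_2 = \|x\|_2,$$
which is exactly the desired inequality.

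I expect there to be essentially no obstacle to overcome here: all of the analytic content is already packaged into the Cauchy--Schwarz-type estimate furnished by the Proposition, and the corollary is merely its instance at the identity element. The only genuine (and entirely routine) verification is that the unit has $\|\cdot\|_2$-norm equal to one, which, as above, follows directly from the trace axioms, so no separate argument is needed.
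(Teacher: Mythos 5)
Your proof is correct and is precisely the argument the paper intends: the corollary is stated as an immediate consequence of the preceding Proposition, obtained by specializing to $y = 1$ and using $\|1\|_2 = \sqrt{\tr(1^*\cdot 1)} = \sqrt{\tr(1)} = 1$. Nothing is missing.
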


In \cite[p. 485--487]{FarHarShe14}, the authors give a complete axiomatization of tracial von Neumann algebras in (an inessential variant of) continuous logic (the details of which need not concern us here). In additional to (some of) the properties given above, they add a final axiom schema, which contains, for each $n \in \N^*$ and $*$-polynomial $p$, an axiom which roughly states, denoting by $m$ the number
$$\max \{ a \in \R \mid \text{there is a tracial von Neumann algebra $A$ and $x \in A$ with $\|x\| \leq n$ s.t. $\|p(x)\|_\infty=a$}\},$$
that for every element $x$ of the axiomatized structure of norm less than or equal to $n$, $\|p(x)\|_\infty\leq m$. This circumlocution is needed because the framework of continuous logic, as mentioned in the Introduction, one cannot introduce $\|\cdot\|_\infty$ as a distinct symbol.

We first note that one can omit the bounding of $x$ by writing
$$p\left(\frac{x}{\max(\|x\|_\infty,n)}\right).$$
Then, an inspection of their proof of the correctness of their axiomatization shows that one only uses the axiom schema, for any $n \in \N$, for a denumerable list of $*$-polynomials $q_{n,k}$ for which the corresponding $m$ is always $1$. Those polynomials are constructed as follows. We fix an $n \in \N$, and define the function $t_n : [0,n] \to \R$, for any $x \in [0,n]$, by
$$t_n(x) :=
  \begin{cases} 
      \hfill 1, \hfill &  \text{if $x \leq 1$,}\\
      \hfill \frac1{\sqrt{x}}, \hfill & \text{otherwise.}\\
  \end{cases}$$
Then one defines for each $k \in \N$ a polynomial $q_{n,k}$ (which we fix here for the remainder of the paper) such that the associated sequence of polynomial functions $(q_{n,k})_{k \in \N}$ tends to $t_n$ from below (this can be arranged with a suitable modification of the Bernstein construction). All these preparations were needed in order to obtain a recursively enumerable list of axioms.

In addition to the above, the paper \cite{FarHarShe14} gives the following characterizations (see also \cite[Lemma 4.2]{FarHarShe13} for clarification on the form used here).

\begin{proposition}[{cf. \cite[Proposition 3.4]{FarHarShe14}}]\label{factori}
Let $A$ be a tracial von Neumann algebra. Then:
\begin{enumerate}[(a)]
\item $A$ is a tracial von Neumann factor iff for all $x\in A$ with $\|x\|_\infty \leq 1$ and  for all $\eps>0$ there is a $y \in A$ with $\|y\|_\infty \leq 1$ such that
$$\|x - \tr(x)\|_2 \leq \|xy-yx\|_2 +\eps.$$
\item $A$ is a II$_1$ factor iff the above condition holds and, in addition, for every $\eps>0$ there is an $x\in A$ with $\|x\|_\infty \leq 1$ such that
$$\|xx^* - (xx^*)^2\|_2 + |\tr(x^*x) - 1/\pi|  \leq \eps.$$
\end{enumerate}
\end{proposition}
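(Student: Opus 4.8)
The plan is to prove each of the two equivalences direction by direction, the only nonelementary inputs being the Dixmier averaging property for part~(a) and the Murray--von Neumann classification of finite factors for part~(b). For part~(a), I would dispatch the right-to-left implication by contraposition: if $A$ is not a factor, its center contains an element outside $\C\cdot 1$, and taking its self-adjoint or imaginary part and rescaling produces a self-adjoint central $z$ with $\|z\|_\infty\leq 1$ and $z\neq \tr(z)\cdot 1$. Since $z$ is central, $\|zy-yz\|_2=0$ for every $y$, so the hypothesised inequality would force $\|z-\tr(z)\|_2\leq\eps$ for all $\eps>0$, i.e.\ $z=\tr(z)\cdot1\in\C\cdot1$, a contradiction. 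For the forward implication, fix a factor $A$, an $x$ with $\|x\|_\infty\leq 1$, and $\eps>0$. The Dixmier averaging theorem, together with the fact that in a finite factor the center-valued trace is the scalar trace, yields unitaries $u_1,\dots,u_m\in A$ and convex coefficients $\lambda_1,\dots,\lambda_m$ with $\big\|\sum_i\lambda_i u_i x u_i^*-\tr(x)\cdot1\big\|_\infty<\eps$. Using $\|\cdot\|_2\leq\|\cdot\|_\infty$, the triangle inequality, the identity $\sum_i\lambda_i=1$, and the invariance of $\|\cdot\|_2$ under right multiplication by a unitary (a consequence of the trace property), I would estimate
\[
\|x-\tr(x)\|_2 \leq \eps + \Big\| \sum_i \lambda_i (x - u_i x u_i^*) \Big\|_2 \leq \eps + \sum_i \lambda_i \|x u_i - u_i x\|_2 \leq \eps + \max_i \|x u_i - u_i x\|_2,
\]
and then take $y:=u_{i_0}$ to be a unitary achieving the maximum, noting that $\|y\|_\infty=1$.

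For the forward direction of part~(b), if $A$ is a $\fff$ factor then in particular it is a factor, so the condition of part~(a) holds by what was just proved; moreover the range of the trace on the projections of a $\fff$ factor is the whole interval $[0,1]$, so there is a projection $p$ with $\tr(p)=1/\pi$. Taking $x:=p$ gives $xx^*=(xx^*)^2=p$ and $\tr(x^*x)=\tr(p)=1/\pi$, so the displayed quantity is exactly $0$. For the converse I would argue as follows: assuming the condition of~(a), part~(a) shows $A$ is a factor, and since $A$ carries a faithful normal tracial state it is finite, so by the classification of finite factors $A$ is either a matrix algebra $\mathcal{M}_n(\C)$ or a $\fff$ factor. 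It therefore suffices to show that the additional condition fails in every $\mathcal{M}_n(\C)$.

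To rule out the matrix algebras, fix $n$, take $x\in\mathcal{M}_n(\C)$ with $\|x\|_\infty\leq1$, and write the eigenvalues of the positive contraction $p:=xx^*$ as $\lambda_1,\dots,\lambda_n\in[0,1]$. Then $\|xx^*-(xx^*)^2\|_2^2=\frac1n\sum_i\lambda_i^2(1-\lambda_i)^2$, so if this norm is at most $\delta$ each $\lambda_i$ lies within $O_n(\delta)$ of $\{0,1\}$, whence $\tr(x^*x)=\tr(p)=\frac1n\sum_i\lambda_i$ lies within $O_n(\delta)$ of some $k/n$. Since $1/\pi$ is irrational, $d_n:=\min_{0\leq k\leq n}|k/n-1/\pi|>0$, and a short case distinction according to whether $\delta$ is large or small relative to $d_n$ produces a lower bound $\eps_n>0$ for the sum $\|xx^*-(xx^*)^2\|_2+|\tr(x^*x)-1/\pi|$ that is independent of $x$. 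Hence for $\eps<\eps_n$ the additional condition fails in $\mathcal{M}_n(\C)$, which forces $A$ to be a $\fff$ factor and completes the converse.

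I expect the genuine obstacle to be the forward implication of part~(a): the contrapositive arguments and the matrix-algebra estimate are elementary, and the $\fff$ case of~(b) is a direct appeal to the range of the trace, but converting the purely algebraic noncentrality of $x-\tr(x)$ into a quantitative commutator lower bound realized by a \emph{single} contraction $y$ relies essentially on Dixmier averaging (equivalently, on identifying the unique trace with the center-valued trace). The remaining care needed is bookkeeping: verifying the right-unitary invariance of $\|\cdot\|_2$ and controlling the $n$-dependent constants in the matrix estimate.
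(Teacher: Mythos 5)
The paper gives no proof of this proposition at all; it is imported directly from Farah--Hart--Sherman \cite{FarHarShe14} (with \cite[Lemma 4.2]{FarHarShe13} for the form used), and your proposal essentially reconstructs the standard argument behind that cited result. Your proof is correct as written: Dixmier averaging plus norm-continuity of the trace (identifying the central element of the averaged orbit as $\tr(x)\cdot 1$) gives the forward direction of (a), faithfulness of the trace gives the converse, and in (b) the projection of trace $1/\pi$ handles one direction while the classification of finite factors combined with the eigenvalue estimate and the irrationality of $1/\pi$ correctly excludes the matrix algebras in the other.
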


\section{The logical systems}\label{sec:sys}

Let $0$ and $X$ be such that $0 \neq X$. We shall denote by $\T^X$ the set of all finite types constructed from $X$, i.e. the underlying set of the free algebra with a unique binary operation $\to$ generated by the set $\{0,X\}$. For example, $0$, $X$, $(0 \to 0) \to 0$, $X \to 0$, $0 \to (X \to X)$ are all elements of $\T^X$. For each $\rho \in \T^X$, we shall denote by $\wh{\rho}$ the type obtained from $\rho$ by replacing every occurrence of $X$ by $0$. We shall set $1:=0\to 0$.

We call a finite sequence of types $\vec{\rho} = (\rho_1,\ldots,\rho_n) \in (\T^X)^*$ a {\it type-tuple}. We extend the operation $\to$ to type-tuples, progressively, first to an operation $\to : (\T^X)^* \times \T^X \to (\T^X)^*$, recursively, by
$$() \to \rho := \rho, \quad (\vec{\theta}, \tau) \to \rho:= \vec{\theta} \to (\tau \to \rho),$$
then to an operation $\to : (\T^X)^* \times (\T^X)^* \to (\T^X)^*$, again recursively, by
$$\vec{\rho} \to () := (), \quad \vec{\rho} \to (\vec{\theta},\tau) := (\vec{\rho} \to \vec{\theta}, \vec{\rho} \to \tau).$$
If we have a type-tuple $\vec{\rho} = (\rho_1,\ldots,\rho_n) \in (\T^X)^*$, we say that a {\it term-tuple} of type-tuple $\vec{\rho}$ is a tuple of terms $\uu{t}=(t_1,...,t_n)$ such that for each $i$, $t_i$ is a term of type $\rho_i$, denoting this situation by $\tp(\uu{t}) = \vec{\rho}$; we shall also say that a {\it constant} of type-tuple $\vec{\rho}$ (or, by abuse of language, of type $\vec{\rho}$) is a tuple of constants of corresponding types. The application of terms can be extended recursively to term-tuples -- $t()$ will be $t$; $t(\uu{t'},u)$ will be $(t\uu{t'})u$; $()\uu{v}$ will be $()$; and, finally, $(\uu{t},u)\uu{v}$ will be $(\uu{t}\uu{v},u\uu{v})$. (Note that we do not have product types in our systems.)

We shall refer in the sequel to the representation of rational and real numbers in systems used in proof mining like $\mathsf{WE{\da}HA}^\omega$ (see \cite[Section 3.3]{Koh08}), as referred to in \cite[pp. 78--81]{Koh08}. If $j:\N^2 \to \N$ is the Cantor pairing function, then for each $n$, $m \in \N$, if $n$ is even, $j(n,m)$ will encode the rational $n/(2m+2)$, while if $n$ is odd, $j(n,m)$ will encode the rational $- (n+1)/(2m+2)$ -- in particular, for any $n \in \N$, $j(2n,0)$ encodes $n$. In addition, each function $f: \N \to \N$ (i.e. an object of type $1$) is thought of as representing a unique real number denoted by $\alpha_f$ (so one works throughout only with representatives). One then has: (i) $\forall$-formulas representing equality and order relations on $\R$, denoted by $=_\R$ and $\leq_\R$; (ii) higher-order primitive-recursive encodings of common operations on $\R$; (iii) an `embedding' of $\Q$ (and in particular of $\N$) into $\R$ by associating to every code of a rational number the function constantly equal to that code.

In addition, we shall also work with representations of complex numbers, in a largely {\it ad hoc} manner. We say that the `type' $\C$ is simply the type-tuple $(1,1)$, and if $t=(t_1,t_2)$ and $s=(s_1,s_2)$ are term-tuples of type $\C$, we write $t=_\C s$ for
$$(t_1 =_\R s_1) \land (t_2 =_\R s_2).$$
It is clear that one can immediately define in this manner all the usual operations on complex numbers, denoted here by $*_\C$ (conjugation), $|\cdot|_\C$ (absolute value), $+_\C$, $\cdot_\C$ etc.

Now, in order to be able to `do' proof mining on tracial von Neumann algebras, we shall define two `higher-order' logical systems $\Tt_{\vN,i}$ and $\Tt_{\vN,c}$ (to be considered, in a sense, `intuitionistic' and `classical', respectively), and a third denoted by $\Tt_{\vN,m}$. We shall start by defining the system $\Tt_{\vN,i}$.

As in \cite[pp. 388--389, 411--413, 435--436]{Koh08}, the language of $\Tt_{\vN,i}$ is obtained by extending that of $\mathsf{WE{\da}HA}^\omega$ (see again \cite[Section 3.3]{Koh08}) to all the types in $\T^X$, i.e. by having variables of each type, and also by having recursor and combinator constants again for all the types in $\T^X$. We shall in addition have the following constants:
\begin{itemize}
\item $0_X$ of type $X$;
\item $+_X$ of type $X \to (X \to X)$;
\item $-_X$ (unary minus) of type $X \to X$;
\item $\cdot$ (multiplication with scalars) of type $\C \to (X \to X)$;
\item $\cdot_X$ (algebra multiplication) of type $X \to (X \to X)$;
\item $1_X$ of type $X$;
\item $*_X$ of type $X \to X$;
\item $\|\cdot\|_\infty$ of type $X \to 1$;
\item $\|\cdot\|_2$ of type $X \to 1$;
\item $\tr$ (actually a pair of constants) of type $X \to \C$.
\end{itemize}

We allow infix notation, along with ``syntactic sugar'', e.g. the representations of rational, real and complex numbers introduced before, as well as writing $x -_X y$ for $x +_X (-_X y)$.

We shall use the following abbreviations:
\begin{enumerate}[(i)]
\item for all terms $s$, $t$ of type $X$, $s=_X t$ shall mean
$$\| s -_X t \|_\infty =_\R 0_\R,$$
and equality of higher types shall be defined extensionally as in \cite[Section 3.3]{Koh08}.
\item for each type $\rho$ we define a relation $\preceq_\rho$ between terms of type $\rho$, as in \cite[Definition 3.22]{Koh05} and \cite[Definition 17.64]{Koh08}, recursively, as follows:
\begin{itemize}
\item $s \preceq_0 t$ means $s \leq t$;
\item $s \preceq_X t$ means $\|s\|_\infty  \leq_\R \|t\|_\infty$;
\item for each $\rho$, $\tau \in \T^X$, $s \preceq_{\rho \to \tau} t$ means $\forall z^\rho (s(z) \preceq_\tau t(z))$.
\end{itemize}
\item for each type $\rho$ we define a relation of majorization $\lesssim_\rho$ between one term of type $\rho$ and one of type $\wh{\rho}$ (due originally to Howard \cite{How73}; strong majorization, due to Bezem \cite{Bez85}, was for the first time extended to the context of abstract types by Kohlenbach \cite{Koh05} and Gerhardy and Kohlenbach \cite{GerKoh08}; to our knowledge, this and the one in \cite{CheLeuSipXX} are the first instances of Howard's simple majorization in abstract types, which is due to our use of systems of lower strength), recursively, as follows:
\begin{itemize}
\item $s \lesssim_0 t$ means $s \leq t$;
\item $s \lesssim_X t$ means $\|s\|_\infty \leq_\R (t)_\R$;
\item for each $\rho$, $\tau \in \T^X$, $s \lesssim_{\rho \to \tau} t$ means
$$\forall x^\rho \forall y^{\wh{\rho}} (x \lesssim_\rho y \to sx \lesssim_\tau ty) .$$
\end{itemize}
\end{enumerate}

Now, in order to make $\Tt_{\vN,i}$ into a logical system, we add, firstly (again, as in \cite[Chapter 17]{Koh08}), all the axioms and rules of $\mathsf{WE{\da}HA}^\omega + \mathsf{QF{\da}AC}$ (as extended to all abstract types), and, secondly, the following list of purely universal axioms:
\begin{itemize}
\item the vector space axioms;
\item the ring axioms;
\item the algebra axioms;
\item the normed space axioms (the ones from \cite[p. 411]{Koh08} as adapted for complex scalars);
\item the normed algebra axiom;
\item trace axioms:
\begin{enumerate}[(i)]
\item axioms expressing that the trace is a $\C$-linear functional;
\item $\forall x^X \left( \tr(*_X(x)) =_\C *_\C(\tr(x)) \right)$;
\item $\tr(1_X)=_\C 1_\C$;
\item $\forall x^X \forall y^X \left(\tr(x \cdot_X y) =_\C \tr(y \cdot_X x) \right)$;
\end{enumerate}
\item the axiom linking the trace to the $2$-norm (which also implies the property that for all $x$ in the algebra, $\tr(x^*x)$ is a non-negative real number):
$$\forall x^X \left((\|x\|_2 \cdot_\R \|x\|_2,0_\R) =_\C \tr(*_X(x_X) \cdot_X x_X) \right);$$
\item normed space axioms for $\|\cdot\|_2$ (again, see \cite[p. 411]{Koh08});
\item the continuity axiom:
$$\forall x^X \forall y^X\left( \| x \cdot_X y \|_2 \leq_\R \|x\|_\infty \cdot_\R \|y\|_2 \right);$$
\item special extensionality axioms:
\begin{enumerate}[(i)]
\item $\forall x^X \forall y^X \left(\|*_X(x) -_X *_X(y)\|_\infty =_\R \|x-_X y\|_\infty\right)$;
\item $\forall x^X \forall y^X \forall z^X \left(\|x \cdot_X (y -_X z)\|_\infty =_\R \|(x \cdot_X y) -_X (x \cdot_X z)\|_\infty \right)$;
\item $\forall x^X \forall y^X \forall z^X \left(\|  (x -_X y)\cdot_X z\|_\infty =_\R \|(x \cdot_X z) -_X (y \cdot_X z)\|_\infty\right)$;
\item $\forall x^X \left(\|x\|_2 \leq_\R \|x\|_\infty\right)$;
\item $\forall x^X \left(|\tr(x)|_\C \leq_\R \|x\|_2\right)$;
\end{enumerate}
\item polynomial axiom schema: for all $n \in \N^*$ and $k \in \N$,
$$\forall x^X\left( q_{n,k}\left(\frac{x}{\max(\|x\|_\infty,n)}\right) \leq_\R 1_\R \right).$$
\end{itemize}

The system $\Tt_{\vN,c}$ is obtained from the system $\Tt_{\vN,i}$ by adding the law-of-excluded-middle schema $A \lor \neg A$, while the system $\Tt_{\vN,m}$ is obtained from the system  $\Tt_{\vN,c}$ by adding, for each $\rho \in \T^X$ and each constant $c$ added above of type $\rho$, a new constant $m_c$ of type $\wh{\rho}$ together with the axiom $c \lesssim_\rho m_c$. Those latter axioms (may) have a high logical complexity which would pose problems if they were used as inputs to a functional interpretation (a complex way to mitigate this can be found in \cite{EngFer20}), hence we only use them in this general system, which will only serve in an output role and is the only one which we shall interpret semantically in the results to come.

\begin{proposition}
All the new constants are provably extensional in $\Tt_{\vN,c}$.
\end{proposition}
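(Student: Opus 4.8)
The plan is to check, constant by constant, that $\Tt_{\vN,c}$ proves the extensionality statement appropriate to the type of the constant; concretely, for a constant $c$ whose type-tuple is $\vec{\rho} \to \tau$ one must derive
$$\bigwedge_i \left(x_i =_{\rho_i} x_i'\right) \to c\,\uu{x} =_\tau c\,\uu{x}'.$$
The pervasive difficulty is that $s =_X t$ abbreviates $\|s -_X t\|_\infty =_\R 0_\R$, and since $=_\R$ (hence $=_X$) is a $\forall$-formula rather than a quantifier-free one, the quantifier-free rule of extensionality of $\mathsf{WE{\da}HA}^\omega$ does \emph{not} permit substituting $=_X$-equal terms inside a compound term. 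Thus every case must be reduced to a genuine estimate showing that the $\|\cdot\|_\infty$-norm (respectively the $\|\cdot\|_2$-norm or the $|\cdot|_\C$-modulus) of the relevant difference equals $0_\R$, and it is exactly the special extensionality axioms that furnish the estimates which naive substitution would otherwise supply. The nullary constants $0_X$ and $1_X$ require nothing.

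For the purely linear fragment --- $+_X$, $-_X$, scalar multiplication and $\|\cdot\|_\infty$ itself --- the argument is the one already carried out for abstract normed spaces in \cite{Koh08}, the only change being the passage to complex scalars (so that, for instance, extensionality of scalar multiplication additionally uses the complex homogeneity axiom $\|\lambda \cdot x\|_\infty =_\R |\lambda|_\C \cdot_\R \|x\|_\infty$ together with $\lambda =_\C \lambda'$). Extensionality of $*_X$ is then immediate from special extensionality axiom (i): from $x =_X x'$ one reads off $\|*_X(x) -_X *_X(x')\|_\infty =_\R \|x -_X x'\|_\infty =_\R 0_\R$. For $\|\cdot\|_2$ one combines the reverse triangle inequality $\big|\|x\|_2 -_\R \|x'\|_2\big| \leq_\R \|x -_X x'\|_2$ (available from the $\|\cdot\|_2$ normed-space axioms as for $\|\cdot\|_\infty$) with axiom (iv): assuming $x =_X x'$, axiom (iv) applied to $x -_X x'$ gives $\|x -_X x'\|_2 \leq_\R \|x -_X x'\|_\infty =_\R 0_\R$, whence $\|x\|_2 =_\R \|x'\|_2$. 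For $\tr$, linearity yields $\tr(x) -_\C \tr(x') =_\C \tr(x -_X x')$, and then axioms (v) and (iv) give
$$|\tr(x) -_\C \tr(x')|_\C =_\R |\tr(x -_X x')|_\C \leq_\R \|x -_X x'\|_2 \leq_\R \|x -_X x'\|_\infty =_\R 0_\R,$$
so that $\tr(x) =_\C \tr(x')$.

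The main obstacle is the algebra multiplication $\cdot_X$, which is the one constant for which the failure of substitution genuinely bites, and whose treatment is the reason axioms (ii) and (iii) were included in the system. Assuming $x =_X x'$ and $y =_X y'$, the plan is to split the relevant difference by inserting $x \cdot_X y'$,
$$\|(x \cdot_X y) -_X (x' \cdot_X y')\|_\infty \leq_\R \|(x \cdot_X y) -_X (x \cdot_X y')\|_\infty +_\R \|(x \cdot_X y') -_X (x' \cdot_X y')\|_\infty,$$
then rewrite the two summands by axioms (ii) and (iii) as $\|x \cdot_X (y -_X y')\|_\infty$ and $\|(x -_X x') \cdot_X y'\|_\infty$, and finally bound each one by the normed algebra axiom: $\|x \cdot_X (y -_X y')\|_\infty \leq_\R \|x\|_\infty \cdot_\R \|y -_X y'\|_\infty =_\R 0_\R$, and symmetrically $\|(x -_X x') \cdot_X y'\|_\infty \leq_\R \|x -_X x'\|_\infty \cdot_\R \|y'\|_\infty =_\R 0_\R$. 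Hence the left-hand side is $0_\R$ and $x \cdot_X y =_X x' \cdot_X y'$. I expect this multiplication case to be the only one demanding real care; once axioms (i)--(v) are in place, the remaining verifications are the routine normed-space estimates described above.
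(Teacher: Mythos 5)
Your proof is correct and takes essentially the same approach as the paper: the same case-by-case verification, with the $+_X$, $-_X$, scalar multiplication and $\|\cdot\|_\infty$ cases deferred to the normed-space treatment in \cite{Koh08}, and each of the special extensionality axioms (i)--(v) deployed on exactly the constant ($*_X$ via (i), $\cdot_X$ via (ii)--(iii), $\|\cdot\|_2$ via (iv), $\tr$ via (iv)--(v)) for which the paper invokes it. The paper's proof is only a terse listing of which axiom handles which constant, so your explicit estimates --- in particular the triangle-inequality splitting through $x \cdot_X y'$ for the multiplication case --- are precisely the details it leaves to the reader.
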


\begin{proof}
The proofs for $+_X$, $-_X$, $\cdot$ and $\|\cdot\|_\infty$ work just as in the case of normed spaces, see \cite[p. 412]{Koh08}.

For the extensionality of $*_X$, i.e.
$$\forall x_1^X \forall x_2^X \left(x_1 =_X x_2 \to *_X(x_1) =_X *_X(x_2) \right),$$
one uses the special extensionality axiom (i).

For the extensionality of $\cdot_X$, i.e.
$$\forall x_1^X \forall x_2^X \forall y_1^X \forall y_2^X\left(x_1 =_X x_2 \land y_1 =_X y_2 \to x_1 \cdot_X y_1 =_X x_2 \cdot y_2 \right),$$
one uses special extensionality axioms (ii) and (iii).

For the extensionality of $\|\cdot\|_2$, i.e.
$$\forall x_1^X \forall x_2^X \left(x_1 =_X x_2 \to \|x_1\|_2 =_\R \|x_2\|_2 \right),$$
one uses the special extensionality axiom (iv).

For the extensionality of $\tr$, i.e.
$$\forall x_1^X \forall x_2^X \left(x_1 =_X x_2 \to \tr(x_1) =_\C \tr(x_2) \right),$$
one uses special extensionality axioms (iv) and (v).
\end{proof}

We define the corresponding theories for tracial von Neumann factors $\Tt_{\vN,i}^\ff$ (and also $\Tt_{\vN,c}^\ff$, $\Tt_{\vN,m}^\ff$) by adding to $\Tt_{\vN,i}$ (resp. $\Tt_{\vN,c}$, $\Tt_{\vN,m}$), denoting by $\wt{x}$ the term (expressed here informally) $x/\max(\|x\|_\infty,1_\R)$, the following axiom:
$$\forall x^X \forall k^0 \exists y^X \preceq_X 1_X \left( \|\wt{x} -_X \tr(\wt{x}) \cdot_X 1_X\|_2 \leq_\R \|\wt{x}\cdot_X y-_X y\cdot_X\wt{x}\|_2 +\left(\frac1{k+1}\right)_\R \right),$$
while the theories for II$_1$ factors $\Tt_{\vN,i}^\fff$ (and also $\Tt_{\vN,c}^\fff$, $\Tt_{\vN,m}^\fff$) are constructed from $\Tt_{\vN,i}^\ff$ (resp. $\Tt_{\vN,c}^\ff$, $\Tt_{\vN,m}^\ff$) by adding the axiom
$$\forall k^0 \exists x^X \preceq_X 1_X \left( \|x \cdot_X *_X(x) -_X (x \cdot_X *_X(x))^2\|_2 + |\tr^{\Re}(x \cdot_X *_X(x)) - 1/\pi|_\R  \leq_\R \left(\frac1{k+1}\right)_\R \right).$$

For more case studies on adapting `naive' axioms to the form used in proof mining, see \cite[Section 7]{GunKoh16} and \cite[Section 3]{Sip19}.

\subsection{Models}\label{sec:models}

We shall now speak a bit about the semantics of these systems. To build a model of $\Tt_{\vN,i}$ (or, respectively, of $\Tt_{\vN,c}$ or $\Tt_{\vN,m}$), we proceed as follows. Firstly, given a set $A$, we define the  {\bf universe} associated to $A$ to be the family $\{\St_\rho\}_{\rho \in \T^X}$, built by setting $\St_0:=\N$ and $\St_X:=A$, and then setting, recursively, for each $\rho$, $\tau \in \T^X$, $\St_{\rho \to \tau}:=\St_\tau^{\St_\rho}$. Note that for types $\rho$ not involving abstract types, the set $\St_\rho$ does not depend on the choice of the set $X$ and we may refer to such sets even when we had not previously made such a choice. Then, to specify a {\bf set-theoretic model} (or simply a {\bf model}), one must first specify a set $A$ as before, and then, denoting the universe associated to it by $\{\St_\rho\}_{\rho \in \T^X}$, to specify, for every constant $c$ of type $\rho$ in the system (whether inherited or extended from $\mathsf{WE{\da}HA}^\omega$, or newly defined above), a corresponding interpretation $\St_c \in \St_\rho$. The semantics of formulas in such models is defined naturally in the Tarskian way and, of course, we do not count an assignment as above of sets and constants as a model except if it satisfies all the axioms and rules of the given system (including, e.g., for $\Tt_{\vN,m}$, the axioms of the form $c \lesssim_\rho m_c$).

The models which we shall mainly talk about interpret e.g. the constants $\|\cdot\|_2$ and $\|\cdot\|_\infty$ as somehow encoding norm functions, whereas they need to be interpreted as functions $X \to \N^\N$, so naturally the question of how to obtain a `canonical' representative for a non-negative real arises. This is done with the so-called `circle' function (first introduced in \cite{Koh05}), denoted by $(\cdot)_\circ$ and defined as follows \cite[p. 383]{Koh08}: for every $x \in \R_+$ and $n \in \N$, one sets
$$(x)_\circ(n):= j \left(2 \max\left\{ k \in \N\ \bigg|\ \frac{k}{2^{n+1}} \leq x \right\}, 2^{n+1} -1 \right).$$
This function has the following properties (cf. \cite[Lemma 17.8]{Koh08}):
\begin{itemize}
\item for each $x \in \R_+$, $(x)_\circ \in \N^\N$ is a monotone function which represents $x$ in the previous sense;
\item as restricted to natural numbers, the function is clearly primitive recursive (since the maximum may be taken simply as $2^{n+1} \cdot x$);
\item for each $x$, $y \in \R_+$ with $x \leq y$, we have that $(x)_\circ \leq_\R (y)_\circ$, $(x)_\circ \preceq_1 (y)_\circ$, and $(x)_\circ \lesssim_1 (y)_\circ$ (the relations being interpreted here semantically, as they do not depend on the choice of a particular set-theoretic model).
\end{itemize}

In order to be able to deal with the trace, we shall define another function $(\cdot)_+ : \R \to \N^\N$ which is able to represent all real numbers. For every $x \in \R$, if $x \geq 0$, then we set $(x)_+:=(x)_\circ$, otherwise, for every $n \in \N$, we set
$$(x)_+(n):= j \left(2 \max\left\{ k \in \N\ \bigg|\ \frac{k}{2^{n+1}} \leq -x \right\}-1, 2^{n+1} -1 \right).$$
One may then check that for each $x \in \R$, $(x)_+ \in \N^\N$ is a function which represents $x$ and that for each $x \in \R$ and $n \in \N$ with $|x| \leq n$, we have that $(x)_+ \preceq_1 (n)_+ = (n)_0$.

Let $A$ now be a tracial von Neumann algebra. We say now that the universe associated to it interprets $X$ as the set $A$, and that the set-theoretic model of $\Tt_{\vN,m}$ associated to it interprets the constants as follows:
\begin{itemize}
\item $0_X$, $+_X$, $-_X$, $\cdot$, $\cdot_X$, $1_X$, $*_X$ in the obvious way;
\item $m_{0_X}$ as the natural number $0$;
\item $m_{+_X}$ as natural number addition;
\item $m_{-_X}$ as the identity function on $\N$;
\item $m_{\cdot}$ as the mapping $f \mapsto (g \mapsto (x \mapsto (f(0)+g(0)+2) \cdot x ))$;
\item $m_{\cdot_X}$ as the mapping $n \mapsto (m \mapsto n \cdot m)$;
\item $m_{1_X}$ as the natural number $1$;
\item $m_{*_X}$ as the identity function on $\N$;
\item $\|\cdot\|_\infty$ as the mapping $x \mapsto (\|x\|_\infty)_\circ$ (as discussed previously);
\item $m_{\|\cdot\|_\infty}$ as the mapping $x \to (x)_\circ$ (which is clearly primitive recursive, as remarked previously);
\item $\|\cdot\|_2$ as the mapping $x \mapsto (\|x\|_2)_\circ$;
\item $m_{\|\cdot\|_2}$ as the mapping $x \mapsto (x)_\circ$;
\item $\tr$ (a pair of constants) as the mapping $x \mapsto ((\Re(\tr(x)))_+,(\Im(\tr(x)))_+)$;
\item $m_{\tr}$ (again a pair of constants) as the mapping $x \to ((x)_\circ,(x)_\circ)$;
\end{itemize}

\begin{theorem}\label{modelt}
Let $A$ be a tracial von Neumann algebra and let $\mathcal{S}$ be the set-theoretic model associated to it. Then $\mathcal{S} \models \Tt_{\vN,m}$.
\end{theorem}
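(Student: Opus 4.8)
The plan is to verify that the assignment of sets and constants described above satisfies every axiom and rule of $\Tt_{\vN,m}$. Since $\{\St_\rho\}_{\rho\in\T^X}$ is the \emph{full} set-theoretic type structure over $\St_0=\N$ and $\St_X=A$, the axioms and rules inherited from $\mathsf{WE{\da}HA}^\omega+\mathsf{QF{\da}AC}$ (extended to all abstract types), together with the law-of-excluded-middle schema, hold by the standard soundness of full extensional type structures: the defining equations of the combinators and recursors are satisfied by their canonical interpretations, quantifier-free choice holds because we are reasoning in $\mathsf{ZFC}$, full extensionality (hence a fortiori the weak extensionality rule) holds since functionals are genuine set-theoretic functions, and classical logic is sound in any Tarskian semantics. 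One should note here that, because $\|\cdot\|_\infty$ is interpreted as a genuine norm, the abbreviation $=_X$ is interpreted as true equality in $A$, and $=_\R$, $=_\C$, $\leq_\R$ as the actual relations on the reals/complexes represented; this is what lets us pass freely between the formal axioms and their analytic content. It therefore remains to check the newly added universal axioms and the majorization axioms $c\lesssim_\rho m_c$.

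For the purely universal axioms, each one, once its representation functions are unwound, is a true statement about the tracial von Neumann algebra $A$ established in Section~\ref{sec:tr}. The vector space, ring and algebra axioms hold because $A$ is a $\C$-algebra; the normed space and normed algebra axioms hold because $\|\cdot\|_\infty$ is a genuine submultiplicative norm; the trace axioms (i)--(iv) are exactly the defining clauses of a trace; the axiom linking $\tr$ to $\|\cdot\|_2$ amounts to the identity $\tr(x^*x)=\|x\|_2^2\in\R_+$, which is the definition of $\|\cdot\|_2$ together with positivity of $\tr$ on elements of the form $x^*x$; the normed space axioms for $\|\cdot\|_2$ hold since it too is a norm, and the continuity axiom is the inequality $\|xy\|_2\leq\|x\|_\infty\|y\|_2$. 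The special extensionality axioms follow from $\|x^*\|_\infty=\|x\|_\infty$ (for (i)), from the genuine distributivity $x(y-z)=xy-xz$ and $(x-y)z=xz-yz$ (for (ii) and (iii)), from $\|x\|_2\leq\|x\|_\infty$ (for (iv)), and from the Corollary $|\tr(x)|\leq\|x\|_2$ (for (v)). Finally, the polynomial axiom schema holds by the very construction of the polynomials $q_{n,k}$: their associated maximal value $m$ is $1$, and the normalization $x/\max(\|x\|_\infty,n)$ guarantees that the argument lies in the range on which this bound is valid.

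For the majorization axioms one unfolds $\lesssim$ according to its recursive definition and reduces, in each case, to a norm (or trace) estimate combined with the monotonicity of $(\cdot)_\circ$. The axioms for $0_X$ and $1_X$ reduce to $\|0\|_\infty=0\leq0$ and $\|1\|_\infty=1\leq1$; those for $+_X$, $-_X$, $\cdot_X$ and $*_X$ reduce, respectively, to the triangle inequality $\|x+y\|_\infty\leq\|x\|_\infty+\|y\|_\infty$, to $\|-x\|_\infty=\|x\|_\infty$, to submultiplicativity $\|xy\|_\infty\leq\|x\|_\infty\|y\|_\infty$, and to $\|x^*\|_\infty=\|x\|_\infty$, the chosen majorants being natural-number addition, the identity, natural-number multiplication and the identity. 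For $\|\cdot\|_\infty$ and $\|\cdot\|_2$ one needs, for $\|x\|_\infty\leq n$, that $(\|x\|_\infty)_\circ\lesssim_1(n)_\circ$ and $(\|x\|_2)_\circ\lesssim_1(n)_\circ$; both follow from $\|x\|_2\leq\|x\|_\infty\leq n$ and the monotonicity property of the circle function. For the trace, given $\|x\|_\infty\leq n$ one has $|\tr(x)|\leq\|x\|_2\leq\|x\|_\infty\leq n$ by the Corollary, whence $|\Re\tr(x)|\leq n$ and $|\Im\tr(x)|\leq n$; the stated property of $(\cdot)_+$ then gives $(\Re\tr(x))_+\preceq_1(n)_\circ$ and $(\Im\tr(x))_+\preceq_1(n)_\circ$, and since $(n)_\circ$ is monotone this upgrades to the majorization $\lesssim_1$ required by the definition of $\lesssim_\C$.

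The step I expect to require the most care is the majorization axiom for scalar multiplication $\cdot$ of type $\C\to(X\to X)$, since here the proof-mining representation machinery for complex numbers genuinely enters rather than just the underlying analysis. One must check that if a scalar $\lambda$ has real and imaginary parts coded by reals majorized (at type $1$) by $f$ and $g$, then $|\lambda|\leq f(0)+g(0)+2$, so that $\|\lambda x\|_\infty=|\lambda|\,\|x\|_\infty\leq(f(0)+g(0)+2)\,n$ matches the value $(f(0)+g(0)+2)\cdot n$ produced by $m_{\cdot}$; the additive constant $2$ is precisely what absorbs the discrepancy between a real and the rational recorded by its representative at argument $0$. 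This, together with the upgrade from $\preceq_1$ to $\lesssim_1$ in the trace case, is where one must argue carefully about the chosen encodings rather than quoting a property of $A$; the remaining verifications are routine.
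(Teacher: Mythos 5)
Your proposal is correct and takes essentially the same approach as the paper's proof: the paper likewise reduces the verification to checking that the interpretations of the majorization constants are majorants---treating $m_{\|\cdot\|_2}$ via $\|x\|_2 \leq \|x\|_\infty \leq n$ and $m_{\tr}$ via $|\Re(\tr(x))| \leq |\tr(x)| \leq \|x\|_2 \leq \|x\|_\infty \leq n$ followed by exactly your upgrade from $\preceq_1$ to $\lesssim_1$ using the monotonicity of $(n)_\circ$---and to checking the quantifier-free extensionality rule, which holds because $\|x-y\|_\infty = 0$ iff $x = y$. The extra detail you supply (soundness of the base system over the full type structure, the universal axioms as true analytic statements about $A$, and the $f(0)+g(0)+2$ bound for $m_{\cdot}$) corresponds to what the paper compresses into references to \cite[p.~430]{Koh08}, where the scalar-multiplication case is described as a slight variation of the Banach space argument due to the use of complex numbers.
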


\begin{proof}
What we need to show is that all the instantiations of the majorization constants are indeed majorants. For most of those, the arguments are largely similar to those for Banach spaces, and we refer to  \cite[p. 430]{Koh08} (the argument for $m_{\cdot}$ being a slight variation, due to the use of complex numbers). The only ones that raise some issues are the `tracial' ones, namely $m_{\|\cdot\|_2}$ and $m_{\tr}$.

For $m_{\|\cdot\|_2}$, the argument is simple. Let $x \in X$ and $n \in \N$ with $\|x\|_\infty \leq n$. We have to show that $(\|x\|_2)_\circ \lesssim_1 (n)_\circ$. Since $\|x\|_2 \leq \|x\|_\infty \leq n$, we immediately deduce the conclusion from the properties of $(\cdot)_\circ$.

For $m_{\tr}$, the argument is a bit more intricate, and we will only show it (w.l.o.g.) for the real part. Let $x \in X$ and $n \in \N$ with $\|x\|_\infty \leq n$. We have to show that $(\Re(\tr(x)))_+ \lesssim_1 (n)_\circ$, i.e. that for any $m$, $p \in \N$ with $m \leq p$, we have that $(\Re(\tr(x)))_+(m) \leq (n)_\circ(p)$. Let $m$, $p \in \N$ with $m \leq p$. Then, since
$$|\Re(\tr(x))| \leq |\tr(x)| \leq \|x\|_2 \leq \|x\|_\infty \leq n,$$
by the properties of $(\cdot)_+$ we have that $(\Re(\tr(x)))_+ \preceq_1 (n)_\circ$, so, using also that $(n)_\circ$ is monotone,
$$(\Re(\tr(x)))_+(m) \leq (n)_\circ(m)\leq (n)_\circ(p),$$
which is what we needed to show.

As remarked in \cite[p. 391]{Koh08}, the only additional, non-trivial issue to check is whether the quantifier-free rule of extensionality holds, and this is true because, in a tracial von Neumann algebra,  for all elements $x$, $y$ of it, $\|x-y\|_\infty = 0$ iff $x=y$.
\end{proof}

\begin{remark}[{cf. \cite[Definition 3.21]{Koh05}}]
We shall say, for any tracial von Neumann algebra $A$ and any sentence $B$ in our language, that $A \models B$ if the the set-theoretic model associated to it satisfies $B$. To simplify things, we may also use this notation when $B$ contains, in addition, elements of $A$, which are (of course) not in the base language.
\end{remark}

The following result is an immediate consequence of Theorem~\ref{modelt} and Proposition~\ref{factori}.

\begin{proposition}\label{factori2}
Let $A$ be a tracial von Neumann algebra and let $\mathcal{S}$ be the set-theoretic model associated to it. Then:
\begin{enumerate}[(i)]
\item $A$ is a tracial von Neumann factor iff $\mathcal{S} \models \Tt_{\vN,m}^\ff$.
\item $A$ is a II$_1$ factor iff $\mathcal{S} \models \Tt_{\vN,m}^\fff$.
\end{enumerate}
\end{proposition}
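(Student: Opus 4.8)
The plan is to leverage Theorem~\ref{modelt}, which already gives $\mathcal{S} \models \Tt_{\vN,m}$. Since $\Tt_{\vN,m}^\ff$ is obtained from $\Tt_{\vN,m}$ by adjoining a single factor axiom, and $\Tt_{\vN,m}^\fff$ by adjoining in addition one II$_1$ axiom, it follows that $\mathcal{S} \models \Tt_{\vN,m}^\ff$ holds iff $\mathcal{S}$ satisfies that factor axiom, and $\mathcal{S} \models \Tt_{\vN,m}^\fff$ holds iff $\mathcal{S}$ satisfies both added axioms. The whole proposition thus reduces to matching the semantic content of these axioms in $\mathcal{S}$ with conditions (a) and (b) of Proposition~\ref{factori}, after which those two characterizations yield the claims at once.

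For part (i), I would unwind the interpretation of the factor axiom under the semantics fixed before Theorem~\ref{modelt}: the variable $x^X$ ranges over $A$, $k^0$ over $\N$, the bounded quantifier $\exists y^X \preceq_X 1_X$ asks for $y \in A$ with $\|y\|_\infty \le 1$ (as $\|1_X\|_\infty = 1$), and each formal term evaluates to its intended analytic value, so that $\tr(\wt{x}) \cdot_X 1_X$ denotes the element $\tr(\wt{x})\cdot 1 \in A$ and the inner $\le_\R$ compares the represented reals. The axiom therefore asserts that for all $x \in A$ and all $k \in \N$ there is such a $y$ with $\|\wt{x} - \tr(\wt{x})\cdot 1\|_2 \le \|\wt{x}y - y\wt{x}\|_2 + 1/(k+1)$. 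Two routine reductions turn this into condition (a): first, as $x$ ranges over $A$ the normalized element $\wt{x} = x/\max(\|x\|_\infty,1)$ ranges exactly over the closed unit ball of $A$, with $\wt{x}=x$ whenever $\|x\|_\infty \le 1$, so the combination of unbounded quantification and normalization is equivalent to quantification over the ball; second, since the displayed inequality is monotone in its error term, the family $\{1/(k+1)\}_{k\in\N}$ may be replaced by arbitrary $\eps>0$. Proposition~\ref{factori}(a) then shows this is equivalent to $A$ being a tracial von Neumann factor.

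For part (ii), the same reduction applied to the II$_1$ axiom shows it asserts that for every $k \in \N$ there is $x \in A$ with $\|x\|_\infty \le 1$ and $\|xx^* - (xx^*)^2\|_2 + |\tr^\Re(xx^*) - 1/\pi| \le 1/(k+1)$. The only point needing attention is the apparent mismatch between $\tr^\Re(xx^*)$ here and $\tr(x^*x)$ in condition (b): by cyclicity of the trace $\tr(xx^*)=\tr(x^*x)$, and by the trace axioms this value is a non-negative real, so its real part is just $\tr(x^*x)$ and the two expressions coincide. With the same $1/(k+1)$-versus-$\eps$ passage, the axiom matches the second clause of (b); together with part (i) and Proposition~\ref{factori}(b) this gives $\mathcal{S} \models \Tt_{\vN,m}^\fff$ iff $A$ is a II$_1$ factor.

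Since Theorem~\ref{modelt} does the heavy lifting, there is no genuinely hard step here; the proof is essentially bookkeeping. The main thing to be careful about is verifying that each formal term is interpreted in $\mathcal{S}$ precisely as the analytic object it is meant to encode, so that the formal and analytic inequalities truly coincide, together with the small trace-identity argument identifying $\tr^\Re(xx^*)$ with $\tr(x^*x)$ in the II$_1$ case.
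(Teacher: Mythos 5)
Your proof is correct and follows essentially the same route as the paper, which states Proposition~\ref{factori2} as an immediate consequence of Theorem~\ref{modelt} and Proposition~\ref{factori} --- precisely the reduction you carry out. The semantic unwinding you supply (the unit-ball normalization via $\wt{x}$, the $1/(k+1)$-versus-$\eps$ passage, and the identification $\tr^{\Re}(x x^*) = \tr(x^*x)$ via cyclicity and positivity) is exactly the bookkeeping the paper leaves implicit.
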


\section{Bound extraction}\label{sec:metath}

We say that a sentence is a {\bf $\Delta$-sentence} if it is of the form
$$\forall \uu{a}^{\uu{\delta}} \exists \uu{b}^{\uu{\sigma}} \preceq_{\uu{\sigma}} \uu{r} \uu{a} \forall \uu{c}^{\uu{\gamma}} B(\uu{a},\uu{b},\uu{c}),$$
where $B$ is quantifier-free and all its free variables are among the tuples $\uu{a}$, $\uu{b}$, $\uu{c}$, and $\uu{r}$ is a tuple of closed terms of type-tuple $\uu{\delta} \to \uu{\sigma}$ of $\Tt_{\vN,i}$, and we say that the {\bf Skolemization} of the sentence above is the sentence
$$\exists \uu{B}^{\uu{\delta} \to \uu{\sigma}} \preceq_{\uu{\delta} \to \uu{\sigma}} \uu{r} \forall \uu{a} \forall \uu{c} B(\uu{a},\uu{B}\uu{a},\uu{c}),$$
noting that if the sentence is purely universal (i.e. $\uu{\sigma}$ is the empty type-tuple), then it is its own Skolemization.

If $\Gamma$ is a set of $\Delta$-sentences, we denote by $\wt{\Gamma}$ the set of their Skolemizations. We note that if $\St$ is a set-theoretic model of $\Tt_{\vN,m}$ such that $\St\models\Gamma$, then $\St\models\wt{\Gamma}$.

We say that the Markov principle, denoted by $\mathsf{M}$, is the collection of all formulas of the form
$$(\neg\neg \exists \uu{x} B) \to \exists \uu{x} B,$$
where $B$ is a quantifier-free formula.

The main tool we shall use is an extension of Kohlenbach's monotone functional interpretation (see, e.g., \cite[Chapter 9]{Koh08}) to systems of abstract types. One first associates to every formula $B$ another formula $B_D$ in the calculus we have defined above, together with two disjoint term-tuples of variables $\uu{x}_B$ and $\uu{y}_B$, recursively, in the following way (with appropriate renamings of variables at crucial points):
\begin{itemize}
\item if $B$ is atomic, then $B_D$ is $B$ and $\uu{x}_B$ and $\uu{y}_B$ are empty;
\item if $B$ is of the form $C \land F$, then $B_D$ is $C_D \land F_D$, $\uu{x}_B$ is $\uu{x}_c$ concantenated with $\uu{x}_F$, while $\uu{y}_B$ is $\uu{y}_C$ concatenated with $\uu{y}_F$;
\item if $B$ is of the form $C \lor F$, then, taking $z$ to be a fresh variable of type $0$, $B_D$ is
$$(z=0 \to C_D) \land(\neg(z=0) \to F_D),$$
$\uu{x}_B$ is $(z)$ concatenated with $\uu{x}_C$ and $\uu{x}_F$, while $\uu{y}_B$ is $\uu{y}_C$ concatenated with $\uu{y}_F$;
\item if $B$ is of the form $C \to F$, then, taking $\uu{U}$ and $\uu{Y}$ to be tuples of fresh variables of suitable types, $B_D$ is
$$C_D[\uu{y}_C := \uu{Y} \uu{x}_C \uu{y}_F] \to F_D[\uu{x}_F := \uu{U} \uu{x}_C],$$
$\uu{x}_B$ is $\uu{U}$ concatenated with $\uu{Y}$, while $\uu{y}_B$ is $\uu{x}_C$ concatenated with $\uu{y}_F$;
\item if $B$ is of the form $\exists z C$, then $B_D$ is $C_D$, $\uu{x}_B$ is $(z)$ concatenated with $\uu{x}_C$, while $\uu{y}_B$ is $\uu{y}_C$;
\item if $B$ is of the form $\forall z C$, then, taking $\uu{X}$ to be a tuple of fresh variables of suitable types, $B_D$ is $C_D[\uu{x}_C := \uu{X}z]$, $\uu{x}_B$ is $\uu{X}$, while $\uu{y}_B$ is $(z)$ concatenated with $\uu{y}_C$.
\end{itemize}

\begin{theorem}\label{mono}
Let $\Gamma$ be a set of $\Delta$-sentences. Let $B$ be a formula and $\uu{a}$ be a term-tuple consisting of all the free variables of $B$. Assume that
$$\Tt_{\vN,i} + \mathsf{M} + \Gamma \vdash B.$$
Then one can extract from the proof a tuple $\uu{t}$ of closed terms of $\Tt_{\vN,m}$ with $\tp(\uu{t}) = \wh{\tp(\uu{a}) \to \tp(\uu{x}_B)}$, which are hereditarily of types not containing the type $X$, such that
$$\Tt_{\vN,m} + \wt{\Gamma} \vdash \exists \uu{x} ( \uu{x} \lesssim \uu{t} \land \forall\uu{a}\forall\uu{y}_BB_D[\uu{x}_B := \uu{x}\uu{a}]).$$
\end{theorem}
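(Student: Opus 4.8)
The plan is to obtain Theorem~\ref{mono} as the combination, extended to the abstract types in $\T^X$, of the soundness of G\"odel's \emph{Dialectica} interpretation with the majorizability of closed terms. Concretely, I would split the argument into two stages: first extract, from the given derivation, a tuple of \emph{exact} realizing terms $\uu{t}'$ of $\Tt_{\vN,i}$ witnessing the functional interpretation $(\cdot)_D$ of $B$ whose clauses are recorded above; then majorize each component of $\uu{t}'$ by a closed, hereditarily $X$-free term of $\Tt_{\vN,m}$, yielding the desired $\uu{t}$.

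For the first stage I would prove, by induction on the length of the derivation of $B$ in $\Tt_{\vN,i}+\mathsf{M}+\Gamma$, the ordinary soundness of $(\cdot)_D$. The logical axioms and rules, together with the combinator and recursor axioms of $\mathsf{WE{\da}HA}^\omega$ extended to $\T^X$, are verified exactly as in \cite[Chapter 8]{Koh08}; here it is essential that we work with \emph{weak} extensionality (the quantifier-free extensionality rule), since full extensionality is not interpretable. The schema $\mathsf{QF{\da}AC}$ and the Markov principle $\mathsf{M}$ are among the principles whose interpretation is trivially provable, so they are absorbed without difficulty, and since the base logic is intuitionistic no negative translation is needed. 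All the nonlogical axioms of $\Tt_{\vN,i}$ --- the vector space, ring, algebra, normed-space, normed-algebra and trace axioms, as well as the special extensionality and polynomial axioms --- are purely universal, hence each is its own interpretation and is simply carried along. The only genuinely new ingredient is the treatment of the $\Delta$-sentences of $\Gamma$: passing to the Skolemizations $\wt{\Gamma}$ in the target theory provides, for each such axiom $\forall\uu{a}\exists\uu{b}\preceq_{\uu{\sigma}}\uu{r}\uu{a}\,\forall\uu{c}\,B$, a Skolem functional that realizes the $\exists$-part and is, by construction, bounded pointwise by the \emph{closed} term $\uu{r}$; this is precisely the feature that lets its majorant be read off from $\uu{r}$ alone.

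For the second stage I would establish the majorization lemma: every closed term $t'$ of $\Tt_{\vN,i}$ of type $\rho$ admits a closed term $t$ of $\Tt_{\vN,m}$ of type $\wh{\rho}$ with $t'\lesssim_\rho t$, and since $\wh{\rho}$ never contains $X$, such $t$ is hereditarily $X$-free. This is proved by induction on the construction of $t'$: the combinators and recursors are majorized by the combinators and recursors of the corresponding $\wh{\cdot}$-types (with an iterated maximum handling the recursor), while each adjoined constant $c$ of type $\rho$ is majorized, by fiat, by its companion $m_c$ of type $\wh{\rho}$ via the axiom $c\lesssim_\rho m_c$ of $\Tt_{\vN,m}$; the Skolem functionals from $\wt{\Gamma}$ are majorized by the closed bounds $\uu{r}$. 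Applying this to the realizers $\uu{t}'$ of the first stage produces the required tuple $\uu{t}$. Finally, I would assemble the conclusion by working inside $\Tt_{\vN,m}+\wt{\Gamma}$ and instantiating the inner existential $\exists\uu{x}$ with $\uu{t}'$ (as a function of $\uu{a}$): the first stage gives $\forall\uu{a}\forall\uu{y}_B\,B_D[\uu{x}_B:=\uu{t}'\uu{a}]$, and the majorization lemma gives $\uu{t}'\lesssim\uu{t}$. The existential wrapper is needed exactly because $\uu{t}'$ may involve the abstract type $X$, whereas the extracted bound $\uu{t}$ is $X$-free; one therefore asserts the existence of an $X$-dependent realizer dominated by the uniform majorant $\uu{t}$.

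I expect the main obstacle to be the majorization lemma in the abstract-type setting, and specifically the use of Howard's \emph{simple} majorization $\lesssim_\rho$ (relating a term of type $\rho$ to one of type $\wh{\rho}$) rather than Bezem's strong majorization: because simple majorization does not have monotonicity built in, the recursor case requires setting up the iterated maximum with care so that a single majorant dominates every stage of the recursion, and one must check that the relation threads correctly through the adjoined constants $m_c$ so that the final $\uu{t}$ is genuinely hereditarily free of $X$. The bookkeeping of the substitutions $[\uu{y}_C:=\uu{Y}\uu{x}_C\uu{y}_F]$ and $[\uu{x}_F:=\uu{U}\uu{x}_C]$ in the implication clause is a secondary subtlety, but it is routine once the majorization lemma is available.
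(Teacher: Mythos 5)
Your proposal is correct and is essentially the paper's own argument: the paper proves Theorem~\ref{mono} by extending Kohlenbach's monotone functional interpretation \cite[Theorem 9.1]{Koh08} to all abstract types, using that the new axioms of $\Tt_{\vN,i}$ are purely universal (hence self-interpreting and self-Skolemizing, so their Skolemizations lie in $\Tt_{\vN,m}$) and constructing majorants as in \cite[Proposition 6.6]{Koh08} via the added constants $m_c$ --- precisely the ingredients you assemble, your factorization into exact Dialectica extraction (with Skolem-function parameters for $\Gamma$, majorized through the bounds $\uu{r}$) followed by Howard-style majorization being the standard way of carrying out that monotone induction. One cosmetic caveat: hereditary $X$-freeness of $\uu{t}$ follows from the majorant construction itself (each constant $c$ is replaced by $m_c$, combinators and recursors by their $\wh{\cdot}$-typed counterparts), not, as you phrase it, from the mere fact that the output type $\wh{\rho}$ contains no $X$.
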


\begin{proof}
The proof extends \cite[Theorem 9.1]{Koh08} to all abstract finite types, using that all the axioms of $\Tt_{\vN,i}$ are purely universal and thus they are their own Skolemizations, so said Skolemizations are contained in $\Tt_{\vN,m}$. For constructing the corresponding majorants one uses the ideas in \cite[Proposition 6.6]{Koh08} and the newly added constants of $\Tt_{\vN,m}$.
\end{proof}

In order to get an interpretation which applies to the classical system $\Tt_{\vN,c}$, one must first apply a negative translation. The translation below is originally due to Kuroda \cite{Kur51}.

\begin{definition}[{\cite[Definition 10.1]{Koh08}}]
Let $B$ be a formula. We define $B^*$ recursively, as follows:
\begin{itemize}
\item if $B$ is atomic, then $B^*$ is $B$;
\item if $B$ is of the form $C \land F$, $C \lor F$ or $C \to F$, then $B^*$ is $C^* \land F^*$, $C^* \lor F^*$, resp. $C^* \to F^*$;
\item if $B$ is of the form $\exists x C$, then $B^*$ is $\exists x \left( C^* \right)$;
\item if $B$ is of the form $\forall x C$, then $B^*$ is $\forall x \neg\neg \left( C^* \right)$.
\end{itemize}
We then set $B^N := \neg\neg \left(B^*\right)$.
\end{definition}

The following result justifies the definition.

\begin{theorem}\label{neg}
Let $\Gamma$ be a set of $\Delta$-sentences. Let $B$ be a formula. Assume that
$$\Tt_{\vN,c} + \Gamma \vdash B.$$
Then
$$\Tt_{\vN,i} + \mathsf{M} + \Gamma \vdash B^N.$$
\end{theorem}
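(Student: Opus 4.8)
The plan is to argue by induction on the length of the given derivation in $\Tt_{\vN,c} + \Gamma$, showing at each step that the negative translation of the formula derived so far is provable in $\Tt_{\vN,i} + \mathsf{M} + \Gamma$. This follows the template of the usual soundness proof for Kuroda's translation (see \cite[Chapter 10]{Koh08}), so the only genuine tasks are to verify the cases of the non-logical axioms particular to our systems and to confirm that nothing breaks upon passing to the richer type structure $\T^X$. The backbone of the whole argument is the fact, already provable in $\Tt_{\vN,i}$ since it is inherited from $\mathsf{WE{\da}HA}^\omega$, that every quantifier-free formula $A_0$ is decidable: atomic formulas are equalities $s =_0 t$ between terms of type $0$, which are decidable as a schema with free variables (of whatever types the subterms happen to have), and Boolean combinations of decidables are decidable. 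Consequently $\Tt_{\vN,i}$ proves $\neg\neg A_0 \leftrightarrow A_0$ and $A_0^* \leftrightarrow A_0$ for quantifier-free $A_0$, which is exactly what renders harmless the double negations that $(\cdot)^*$ inserts in front of, or above, a quantifier-free matrix. I stress that the passage to abstract types is immaterial throughout, since the negative translation acts only on the logical skeleton of a formula and leaves the types of its quantified variables untouched.

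For the logical axioms and rules of the underlying calculus shared by $\Tt_{\vN,c}$ and $\Tt_{\vN,i}$, I would simply transcribe the standard verifications. The one schema distinguishing the classical system is the law of excluded middle $A \lor \neg A$; its translation is $(A \lor \neg A)^N = \neg\neg(A^* \lor \neg A^*)$, and $\neg\neg(C \lor \neg C)$ is intuitionistically provable for any $C$. The purely universal non-logical axioms of $\Tt_{\vN,i}$ (the vector space, ring, algebra, norm, trace, continuity, extensionality and polynomial axioms), as well as the $\Delta$-sentences of $\Gamma$, all remain available on the intuitionistic side, so for each such sentence $\sigma$ it suffices to derive $\sigma^N$ from $\sigma$. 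After unfolding, the $\preceq$-bounds and the matrices of these sentences reduce to $\forall$-formulas over decidable matrices, so $(\cdot)^*$ only inserts removable double negations; the remaining steps are to weaken an existential $\exists \uu{b}$ to $\neg\neg\exists \uu{b}$ and to prefix an outer $\neg\neg$, both of which are trivial in intuitionistic logic.

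The one place where the argument genuinely forces Markov's principle --- and hence the reason $\mathsf{M}$ appears on the right-hand side --- is the quantifier-free axiom of choice $\mathsf{QF{\da}AC}$ contained in $\Tt_{\vN,i}$. Writing a typical instance as $\forall \uu{x}\, \exists \uu{y}\, A_0 \to \exists \uu{Y}\, \forall \uu{x}\, A_0(\uu{x},\uu{Y}\uu{x})$ with $A_0$ quantifier-free, its negative translation reduces (modulo removal of harmless double negations surrounding $A_0$) to establishing $\forall \uu{x}\, \neg\neg\exists \uu{y}\, A_0 \to \exists \uu{Y}\, \forall \uu{x}\, A_0(\uu{x},\uu{Y}\uu{x})$. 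Here the antecedent only delivers, for each $\uu{x}$, the formula $\neg\neg\exists \uu{y}\, A_0$, whereas in order to apply $\mathsf{QF{\da}AC}$ itself one needs the un-negated $\forall \uu{x}\, \exists \uu{y}\, A_0$; the passage from the former to the latter is precisely an instance of $\mathsf{M}$ for the quantifier-free $A_0$. Once this is granted, $\mathsf{QF{\da}AC}$ supplies the choice functional $\uu{Y}$, and one re-weakens $A_0$ to $\neg\neg A_0$ to match the translated conclusion. I expect this to be the main obstacle, in that it is the only step that is not formally automatic; the remaining rules of the calculus, including the quantifier-free rule of extensionality, are dispatched exactly as in the standard treatment using again the decidability of quantifier-free formulas, and the enlargement of the type system to $\T^X$ introduces no further difficulty.
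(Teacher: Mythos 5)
Your proposal is correct and takes essentially the same route as the paper, whose entire proof is the remark that it ``extends \cite[Proposition 10.19]{Koh08}'' --- i.e.\ precisely the standard soundness argument for Kuroda's translation over a system with $\mathsf{QF{\da}AC}$ that you transcribe. You have correctly identified the only points where the extension requires checking: the purely universal axioms of $\Tt_{\vN,i}$ and the $\Delta$-sentences of $\Gamma$ intuitionistically imply their own translations (since their matrices and $\preceq$-bounds unfold to $\forall$-formulas over decidable $=_0$-atoms), and $\mathsf{M}$ is needed exactly to pass from $\forall \uu{x}\,\neg\neg\exists \uu{y}\,A_0$ to $\forall \uu{x}\,\exists \uu{y}\,A_0$ when interpreting $\mathsf{QF{\da}AC}$.
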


\begin{proof}
The proof extends \cite[Proposition 10.19]{Koh08}.
\end{proof}

We now present our syntactic metatheorem for tracial von Neumann algebras.

\begin{theorem}\label{comb}
Let $\Gamma$ be a set of $\Delta$-sentences. Let $B$ be a quantifier-free formula having at most the free variable $z$ of type $0$. Assume that
$$\Tt_{\vN,c}  + \Gamma \vdash \exists z B.$$
Then one can extract from the proof a closed term $t$ of $\Tt_{\vN,m}$ of type $0$, which is hereditarily of types not containing the type $X$, such that
$$\Tt_{\vN,m} + \wt{\Gamma} \vdash \exists z (z \leq t \land B).$$
\end{theorem}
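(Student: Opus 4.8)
The plan is to combine the two preceding metatheorems—the negative translation (Theorem~\ref{neg}) and the monotone functional interpretation (Theorem~\ref{mono})—and then specialize to the simple syntactic shape of the conclusion. First I would take the hypothesis $\Tt_{\vN,c} + \Gamma \vdash \exists z\, B$ and apply Theorem~\ref{neg} to obtain
$$\Tt_{\vN,i} + \mathsf{M} + \Gamma \vdash (\exists z\, B)^N.$$
Since $B$ is quantifier-free, its Kuroda translation $B^*$ is just $B$ itself (atomic connectives are preserved, and there are no universal quantifiers inside to decorate), so $(\exists z\, B)^N$ is $\neg\neg\exists z\, B$. At this point I would invoke the Markov principle $\mathsf{M}$, which is available in the system on the left: one of its instances is precisely $(\neg\neg\exists z\, B) \to \exists z\, B$, and so we recover
$$\Tt_{\vN,i} + \mathsf{M} + \Gamma \vdash \exists z\, B.$$

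Next I would feed this into Theorem~\ref{mono}, taking the formula there to be $\exists z\, B$ (which has no free variables, so the tuple $\uu{a}$ is empty). The monotone functional interpretation extracts a closed term-tuple $\uu{t}$ of $\Tt_{\vN,m}$, hereditarily of types not containing $X$, with $\tp(\uu{t}) = \wh{\tp(\uu{x}_{\exists z B})}$, such that
$$\Tt_{\vN,m} + \wt{\Gamma} \vdash \exists \uu{x}\, (\uu{x} \lesssim \uu{t} \land \forall \uu{y}\, (\exists z\, B)_D[\uu{x}_{\exists z B} := \uu{x}]).$$
Here I would unwind the Dialectica data for $\exists z\, B$. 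By the clause for the existential quantifier, $(\exists z\, B)_D$ is just $B_D$ with $z$ prepended to the existential tuple $\uu{x}_{\exists z B}$; and since $B$ is quantifier-free, $B_D$ is $B$ and the tuples $\uu{x}_B$, $\uu{y}_B$ are empty. Hence $\uu{x}_{\exists z B}$ is the single variable $(z)$ of type $0$, $\uu{y}$ is empty, and the extracted tuple $\uu{t}$ reduces to a single closed term $t$ of type $\wh{0} = 0$. The interpreted conclusion therefore collapses to
$$\Tt_{\vN,m} + \wt{\Gamma} \vdash \exists z\, (z \lesssim_0 t \land B),$$
and by the definition of $\lesssim_0$ (namely $z \lesssim_0 t$ means $z \leq t$) this is exactly the desired $\exists z\, (z \leq t \land B)$.

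The main subtlety—rather than a genuine obstacle—lies in tracking the Dialectica bookkeeping carefully enough to be sure that everything degenerates as claimed: that the majorization relation at type $0$ is literally the numerical $\leq$, that no spurious universally quantified witness variables $\uu{y}$ survive to clutter the statement, and that the closed term $t$ produced really is of type $0$ and hereditarily free of the abstract type $X$ (the latter being guaranteed by the corresponding clause of Theorem~\ref{mono}). I would also note explicitly that the passage through $\mathsf{M}$ is essential: without it the negative translation would only yield $\neg\neg\exists z\, B$, which does not interpret directly to a bound. Since $B$ is quantifier-free, the application of Markov's principle is legitimate and, together with the fact that all axioms of $\Tt_{\vN,i}$ are purely universal and hence their own Skolemizations, the whole argument goes through as a clean concatenation of the two metatheorems.
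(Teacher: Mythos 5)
Your proposal is correct and follows essentially the same route as the paper's own proof: negative translation (Theorem~\ref{neg}), elimination of the double negation via $\mathsf{M}$, and then Theorem~\ref{mono} applied to $\exists z\, B$ with the observation that $(\exists z B)_D = B$, $\uu{x}_{\exists z B} = (z)$ and $\uu{y}_{\exists z B}$ empty, so that $\lesssim_0$ collapses to $\leq$. The extra bookkeeping you spell out (the Kuroda translation acting trivially on quantifier-free $B$, the type of $t$ being $\wh{0}=0$) is exactly what the paper leaves implicit.
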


\begin{proof}
By Theorem~\ref{neg}, it follows that
$$\Tt_{\vN,i} + \mathsf{M} + \Gamma \vdash \neg\neg  \exists z B.$$
Using $\mathsf{M}$, we get that
$$\Tt_{\vN,i} + \mathsf{M} + \Gamma \vdash  \exists z B.$$
Now, by Theorem~\ref{mono}, and using that $(\exists z B)_D = B$, $\uu{x}_{\exists z B} = (z)$, while $\uu{y}_{\exists z B}$ is empty, we get the desired conclusion.
\end{proof}

Its immediate semantic interpretation is as follows.

\begin{theorem}\label{semcor}
Let $\Gamma$ be a set of $\Delta$-sentences. Let $B$ be a quantifier-free formula having at most the free variable $z$ of type $0$. Assume that
$$\Tt_{\vN,c}  + \Gamma \vdash \exists z B.$$
Then one can extract from the proof a closed term $t$ of $\Tt_{\vN,m}$ of type $0$, which is hereditarily of types not containing the type $X$, such that for every set-theoretic model $\St$ of $\Tt_{\vN,m} + \Gamma$, we have that
$$\St \models \exists z (z \leq t \land B).$$
\end{theorem}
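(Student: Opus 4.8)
The plan is to derive this semantic metatheorem as an essentially immediate corollary of the syntactic metatheorem Theorem~\ref{comb}, the bridge being the soundness of the set-theoretic semantics with respect to the proof system $\Tt_{\vN,m}$.

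First I would invoke Theorem~\ref{comb} on the hypothesis $\Tt_{\vN,c} + \Gamma \vdash \exists z B$. Since the hypotheses of the present theorem coincide exactly with those of Theorem~\ref{comb}, this directly produces the same closed term $t$ of $\Tt_{\vN,m}$, of type $0$ and hereditarily of types not containing $X$, together with the syntactic conclusion
$$\Tt_{\vN,m} + \wt{\Gamma} \vdash \exists z (z \leq t \land B).$$
This $t$ is precisely the term asserted to exist in the conclusion, so nothing further needs to be constructed; it remains only to pass from derivability to truth in every model.

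Next I would fix an arbitrary set-theoretic model $\St$ of $\Tt_{\vN,m} + \Gamma$. By the observation recorded just before the definition of the Markov principle, since $\St \models \Tt_{\vN,m}$ and $\St \models \Gamma$ we also have $\St \models \wt{\Gamma}$: each $\Delta$-sentence of $\Gamma$ supplies, for every $\uu{a}$, a witness $\uu{b} \preceq \uu{r}\uu{a}$, and because $\St$ interprets the higher types as full set-theoretic function spaces, the axiom of choice in the metatheory assembles these pointwise witnesses into a single Skolem functional $\uu{B} \preceq \uu{r}$ realizing the Skolemization. Thus $\St$ is in fact a model of the whole theory $\Tt_{\vN,m} + \wt{\Gamma}$.

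Finally I would apply soundness. By construction an assignment is counted as a model of $\Tt_{\vN,m}$ only when it satisfies all of that system's axioms and rules, and since $\Tt_{\vN,m}$ is classical the Tarskian interpretation validates the underlying classical higher-order logic; hence any sentence derivable in $\Tt_{\vN,m} + \wt{\Gamma}$ holds in every model of that theory. Applying this to the displayed derivation yields $\St \models \exists z (z \leq t \land B)$, the desired conclusion. The one step carrying genuine content is this soundness claim: one must verify, by induction on derivations, that the natural interpretation of the higher-type calculus over the universe $\{\St_\rho\}_{\rho \in \T^X}$---including the combinators and recursors, the quantifier-free extensionality rule, and the majorization axioms $c \lesssim_\rho m_c$---is preserved under all deduction rules. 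I expect this to be routine rather than difficult, the satisfaction of the axioms being already folded into the definition of a model, so that what remains is exactly the standard classical soundness theorem transported to abstract finite types.
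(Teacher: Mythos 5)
Your proposal is correct and follows essentially the same route as the paper: invoke Theorem~\ref{comb}, use the remark (stated just before the definition of $\mathsf{M}$) that any set-theoretic model of $\Tt_{\vN,m} + \Gamma$ also satisfies $\wt{\Gamma}$, and conclude by soundness of the Tarskian semantics. The paper's own proof is just this argument stated tersely, leaving the soundness step implicit, whereas you spell it out (along with the choice-based justification of the remark), which is a faithful elaboration rather than a different approach.
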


\begin{proof}
Considering the earlier remark stating that for every set-theoretic model $\St$ of $\Tt_{\vN,m}$ such that $\St\models\Gamma$, we have that $\St\models\wt{\Gamma}$, we just apply Theorem~\ref{comb} and we are done.
\end{proof}

We therefore obtain the following workable metatheorem for tracial von Neumann algebras.

\begin{theorem}\label{metric}
Let $\Gamma$ be a set of $\Delta$-sentences. Let $B$ be a quantifier-free formula having at most the free variable $z$ of type $0$. Assume that
$$\Tt_{\vN,c} + \Gamma \vdash \exists z B.$$
Then one can extract from the proof an $n \in \N$ such that for every tracial von Neumann algebra $A$ with $A \models \Gamma$, we have that
$$A \models \exists z (z \leq n \land B).$$
\end{theorem}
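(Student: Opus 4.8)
The plan is to reduce the claim to the semantic extraction already furnished by Theorem~\ref{semcor}, the one new ingredient being the observation that the closed term produced there denotes a single natural number across all the models that arise from tracial von Neumann algebras. First I would apply Theorem~\ref{semcor} to the hypothesis $\Tt_{\vN,c} + \Gamma \vdash \exists z B$, obtaining a closed term $t$ of $\Tt_{\vN,m}$ of type $0$, hereditarily of types not containing $X$, such that $\St \models \exists z (z \leq t \land B)$ for every set-theoretic model $\St$ of $\Tt_{\vN,m} + \Gamma$.

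The crux is then to define $n$ as the value of $t$ and to check that this value is model-independent. Since $t$ is closed, of type $0$, and hereditarily built from types not involving $X$, its interpretation in any model is computed entirely within the pure-arithmetic fragment: it uses only the combinators and recursors, the constants inherited from $\mathsf{WE{\da}HA}^\omega$, and the majorization constants $m_c$. Inspecting the interpretations fixed in Subsection~\ref{sec:models}, each such $m_c$ is interpreted by a function of pure type (over $\N$ and its function spaces) whose definition makes no reference to the underlying algebra $A$; for instance $m_{\|\cdot\|_\infty}$, $m_{\|\cdot\|_2}$ and $m_{\tr}$ are all built from $(\cdot)_\circ$ alone. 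Hence $\St(t)$ is one and the same natural number $n$ in every model $\St$ associated to a tracial von Neumann algebra; equivalently, $n$ is extracted from $t$ by reducing it to a numeral.

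It remains to pass from models back to algebras. Given a tracial von Neumann algebra $A$ with $A \models \Gamma$, let $\St$ be the set-theoretic model associated to it; by Theorem~\ref{modelt} we have $\St \models \Tt_{\vN,m}$, and by the reading of $A \models \Gamma$ recorded in the remark just after that theorem we have $\St \models \Gamma$, so $\St$ is a model of $\Tt_{\vN,m} + \Gamma$. Applying Theorem~\ref{semcor} to this $\St$ and substituting $\St(t) = n$ yields $\St \models \exists z (z \leq n \land B)$, which, unwinding the meaning of $A \models \cdots$, is precisely $A \models \exists z (z \leq n \land B)$. The only genuine obstacle here is the model-independence established in the second paragraph, which rests entirely on the term being hereditarily free of $X$ and on every $m_c$ having received an $A$-independent interpretation; the remaining steps are a routine chaining of the earlier metatheorems.
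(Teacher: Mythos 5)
Your proposal is correct and follows essentially the same route as the paper: apply Theorem~\ref{semcor} to obtain the closed type-$0$ term $t$, observe that $t$ is interpreted uniformly across the models associated to tracial von Neumann algebras (since it is hereditarily free of $X$ and the constants $m_c$ have fixed, $A$-independent interpretations), and take $n$ to be this common value. Your second paragraph merely spells out in detail what the paper compresses into the phrase ``$t$ is interpreted uniformly in this class of models.''
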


\begin{proof}
By Theorem~\ref{semcor}, we get that one can extract from the proof a closed term $t$ of $\Tt_{\vN,m}$ of type $0$, which is hereditarily of types not containing the type $X$, such that for every tracial von Neumann algebra $A$ with $A \models \Gamma$, we have that
$$A \models \exists z (z \leq t \land B).$$
We have that $t$ is interpreted uniformly in this class of models, and its interpretation gives the needed $n \in \N$.
\end{proof}

The following more general-looking variant is more in line with how metatheorems of proof mining are usually expressed.

\begin{theorem}\label{metric-cl}
Let $\Gamma$ be a set of $\Delta$-sentences. Let $B$ be a quantifier-free formula having at most the free variables $y$ of an arbitrary type $\tau$, and $z$ of type $0$. Assume that
$$\Tt_{\vN,c}  + \Gamma \vdash \forall y \exists z B.$$
Then one can extract from the proof a higher-order primitive recursive functional $\Phi : \St_{\wh{\tau}} \to \N$ such that for every tracial von Neumann algebra $A$ with $A \models \Gamma$, letting $\St$ be the model associated to $A$, and for every $b \in \St_\tau$ and $b' \in \St_{\wh{\tau}}$ such that we have that $\St \models b \lesssim_\tau b'$, we have that
$$A \models \exists z (z \leq \Phi(b') \land B[y:=b]).$$
\end{theorem}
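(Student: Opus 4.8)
The plan is to reduce the statement with a free variable $y$ of arbitrary type $\tau$ to the variable-free situation already handled in Theorem~\ref{metric}, by internalizing the majorant of $y$ into the hypothesis. Concretely, starting from
$$\Tt_{\vN,c} + \Gamma \vdash \forall y \exists z B,$$
I would first observe that this is logically equivalent to a statement about a single (now bound, then re-freed) variable, and apply the syntactic extraction of Theorem~\ref{comb} — or rather its proof — not to the closed sentence $\exists z B$ but to the formula $\exists z B$ treated as having the free variable $y$. Thus I would invoke Theorem~\ref{mono} directly with the formula $\forall y \exists z B$ (after the negative translation of Theorem~\ref{neg} and an application of $\mathsf{M}$ exactly as in the proof of Theorem~\ref{comb}), rather than re-deriving everything. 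Since $(\forall y \exists z B)_D = B$ with $\uu{x}_{\forall y \exists z B}$ consisting of a functional $X$ of type $\wh{\tau} \to 0$ (coming from the universal quantifier clause, which sets $B_D := C_D[\uu{x}_C := \uu{X}z]$) together with the witness $z$, the monotone interpretation yields a closed term $t$ of $\Tt_{\vN,m}$ of type-tuple $\wh{\tau \to 0} = \wh{\tau} \to 0$, hereditarily over types not containing $X$, and a term-tuple $\uu{x}$ with $\uu{x} \lesssim t$ such that
$$\Tt_{\vN,m} + \wt{\Gamma} \vdash \forall y\, B[z := (\uu{x})_2\, y],$$
where I single out the component of $\uu{x}$ witnessing $z$ as a functional of $y$ majorized by $t$.

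Next I would pass to the semantic side exactly as in the proof of Theorem~\ref{semcor}: for a tracial von Neumann algebra $A$ with $A \models \Gamma$, letting $\St$ be its associated model (Theorem~\ref{modelt} guarantees $\St \models \Tt_{\vN,m}$, and $\St \models \Gamma$ gives $\St \models \wt{\Gamma}$), the displayed provability transfers to $\St \models \forall y\, B[z := (\uu{x})_2\, y]$. The key point of majorization is now that $t$, being a closed term over types not containing $X$, has an interpretation $\St_t$ that does \emph{not} depend on the particular algebra $A$ — it is computed purely in the standard part $\{\St_\rho : \rho \text{ over } \{0\}\}$ of the universe — and this is precisely the higher-order primitive recursive functional $\Phi : \St_{\wh{\tau}} \to \N$ we seek. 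Given $b \in \St_\tau$ and $b' \in \St_{\wh{\tau}}$ with $\St \models b \lesssim_\tau b'$, the majorization $\uu{x} \lesssim t$ instantiated at $b \lesssim b'$ yields, by the definition of $\lesssim$ at type $0$, the numerical bound $(\uu{x})_2\, b \leq \St_t(b') = \Phi(b')$. Combining this with $\St \models B[y := b, z := (\uu{x})_2\, b]$ gives $\St \models \exists z (z \leq \Phi(b') \land B[y := b])$, i.e. $A \models \exists z (z \leq \Phi(b') \land B[y := b])$, which is the claim.

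The main obstacle I anticipate is the bookkeeping around the types and the two components of $\uu{x}$. One must be careful that the universal-quantifier clause of the Dialectica translation replaces the witness for $z$ by a term depending on $y$, so that the majorant $t$ really is a functional of type $\wh{\tau} \to 0$ and not merely a natural number; the reduction to the $y$-free Theorem~\ref{metric} is therefore genuinely weaker and cannot be used as a black box — I would need to re-run the argument of Theorem~\ref{comb} keeping the free variable $y$ throughout. The other delicate point is arguing the uniformity: that $\St_t$ is independent of $A$. This follows from the structural fact, already invoked in the proof of Theorem~\ref{metric}, that a closed term hereditarily over $\{0\}$ is interpreted identically in every set-theoretic model regardless of the choice of $X := A$, since its value is built only from the combinators, recursors, and majorant constants $m_c$ living over the standard types. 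Verifying that $\Phi$ is indeed higher-order primitive recursive amounts to noting that $t$ is a closed term of $\Tt_{\vN,m}$ and that all the newly added majorant constants $m_c$ were interpreted (in Subsection~\ref{sec:models}) by primitive recursive data, so no extractive work beyond Theorem~\ref{mono} is required here.
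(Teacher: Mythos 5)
Your proposal is correct in substance, but it takes a genuinely different route from the paper. You leave $y$ quantified and let the monotone interpretation process $\forall y$ itself: the Dialectica clause for the universal quantifier turns the witness for $z$ into a Skolem functional, and the extracted closed majorant $t$ of type $\wh{\tau} \to 0$, being hereditarily of types not containing $X$, is interpreted uniformly across all models and serves as $\Phi$. The paper never feeds $\forall y$ into the interpretation at all: it extends the three systems by a fresh constant $c_y$ of type $\tau$ together with a majorant constant $m_{c_y}$ of type $\wh{\tau}$ and the axiom $c_y \lesssim_\tau m_{c_y}$, applies the extension to these primed systems of the already-established closed-sentence result (Theorem~\ref{semcor}) to $\exists z\, B[y := c_y]$, and then obtains $\Phi$ by $\lambda$-abstracting the occurrences of $m_{c_y}$ in the extracted term; semantically, each model $\St$ with designated $b \lesssim_\tau b'$ is expanded to a primed model interpreting $c_y$ as $b$ and $m_{c_y}$ as $b'$. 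Your route stays inside the original systems and is closer to the classical Gerhardy--Kohlenbach style of proving such metatheorems; the paper's route reuses the $y$-free theorem essentially as a black box and sidesteps a step you gloss over, namely that the negative translation of $\forall y \exists z B$ is $\neg\neg\forall y \neg\neg \exists z B$, so before invoking Theorem~\ref{mono} you must push the outer double negation through $\forall y$ and collapse the resulting triple negation before $\mathsf{M}$ applies --- routine intuitionistic steps, but not literally ``exactly as in the proof of Theorem~\ref{comb}''.

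Three bookkeeping slips, none fatal. First, the Skolem functional (the unique component of $\uu{x}_{\forall y \exists z B}$) has type $\tau \to 0$, not $\wh{\tau} \to 0$; only its majorant $t$ has type $\wh{\tau} \to 0$, and indeed your later instantiation of $b \lesssim_\tau b'$ uses exactly this. Second, the witness $z$ is not a separate component of $\uu{x}_{\forall y \exists z B}$: the $\forall$-clause substitutes it by the value of the Skolem functional, so $\uu{x}_{\forall y \exists z B}$ is a single functional and there is no ``$(\uu{x})_2$''. Third, Theorem~\ref{mono} concludes with a provable existential statement $\exists \uu{x}\,(\uu{x} \lesssim \uu{t} \land \cdots)$ rather than producing a concrete witnessing term; semantically the witnessing functional may vary with the algebra $A$, and only the bound $t$ is uniform --- which is all your argument actually uses.
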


\begin{proof}
Let $\Tt'_{\vN,i}$ be the system $\Tt_{\vN,i}$ with an additional constant $c_y$ of type $\tau$, and construct $\Tt'_{\vN,c}$ and $\Tt'_{\vN,m}$ in an analogous way (e.g. the last one containing a constant $m_{c_y}$ of type $\wh{\tau}$ with a corresponding axiom $c_y \lesssim_\rho m_{c_y}$). Then we have that
$$\Tt'_{\vN,c}  + \Gamma \vdash \exists z B[y:=c_y].$$
By the corresponding extension of Theorem~\ref{semcor} to these `primed' systems, we get that one can extract from the proof a closed term $t$ of $\Tt'_{\vN,m}$ of type $0$, which is hereditarily of types not containing the type $X$, such that for every set-theoretic model $\mathcal{U}$ of $\Tt'_{\vN,m} + \Gamma$, we have that
$$\mathcal{U} \models \exists z (z \leq t \land B[y:=c_y]).$$

Let $t'$ be the $\Tt_{\vN,m}$ term obtained by replacing every occurence of $m_{c_y}$ in $t$ by a fresh variable $w$ of type $\wh{\tau}$. We have that the closed $\Tt_{\vN,m}$ term $\lambda w. t'$ is interpreted uniformly in this class of models, and this interpretation  is the needed higher-order primitive recursive functional $\Phi : \St_{\wh{\tau}} \to \N$.

We now show that $\Phi$ has the desired properties. Let now $A$ be a tracial von Neumann algebra such that $A \models \Gamma$. Let $\St$ be the model associated to $A$ -- denoting its universe by  $\{\St_\rho\}_{\rho \in \T^X}$ -- and let $b \in \St_\tau$ and $b' \in \St_{\wh{\tau}}$ be such that $\St \models b \lesssim_\tau b'$. It is immediate that we can extend $\St$ to a model $\mathcal{U}$ of $\Tt'_{\vN,m} + \Gamma$ by interpreting $c_y$ as $b$ and $m_{c_y}$ as $b'$, and thus we get that
$$\mathcal{U} \models \exists z (z \leq t \land B[y:=c_y]),$$
i.e., that
$$\mathcal{U} \models \exists z (z \leq (\lambda w.t')(m_{c_y}) \land B[y:=c_y]),$$
from which we get that
$$\St \models \exists z (z \leq \Phi(b') \land B[y:=b]),$$
i.e. our conclusion.
\end{proof}

\begin{remark}
In light of Proposition~\ref{factori2}, we have that Theorem~\ref{metric-cl} remains true if we replace $\Tt_{\vN,c}$ by $\Tt_{\vN,c}^\ff$ (or $\Tt_{\vN,c}^\fff$) and `tracial von Neumann algebra' by `tracial von Neumann factor' (resp. `II$_1$ factor'), since all the additional axioms in those theories are $\Delta$-sentences and thus may be considered as part of the set $\Gamma$.
\end{remark}

\begin{remark}
The axiomatization given in \cite{FarHarShe14} and adapted here contains, as remarked earlier, only purely universal sentences, and it was indeed remarked in \cite[Section 5]{Gol12} that it should be so, since tracial von Neumann algebras are closed under substructures. Nevertheless, in the first preprint version \href{https://arxiv.org/abs/1004.0741v1}{arXiv:1004.0741v1} of \cite{FarHarShe14}, the authors give there an axiomatization whose last axiom would translate here (only) to a $\Delta$-sentence, and thus it would still make the proofs of the metatheorems go through, which is why we mention it here as an alternative. More specifically, one replaces the polynomial axiom schema with the following `polar decomposition' axiom (where $\wt{a}$ denotes the term expressed -- again, informally -- as $a/\max(\|a\|_\infty,(n)_\R)$):
$$\forall n^0 \forall a^X \exists b^X \preceq_X 1_X \exists c^X \preceq 1_X \exists d_X \preceq_X n \cdot 1_X \left( \|\wt{a}-bc\|_2 + \|c-d^*d\|_2 + \|b^*b-1_X\|_2 =_\R 0_\R \right),$$
which may be more convincing as it needs no elaborate preparations.
\end{remark}

\section{Conclusions and future work}\label{sec:conc}

The present paper represents a pilot study in doing proof mining on tracial von Neumann algebras. We have in mind some future possible applications: for example, from the result in \cite{ArzPau15}, which states that every finite list of permutations that `almost' commute is `near' another such list of actually commuting permutations and which is proven non-constructively using tracial von Neumann algebras, one could extract moduli quantifying exactly what is meant by `almost' and `near'. Another potential application would be a quantitative treatment of Walsh's ergodic theorem \cite{Wal12}, which was first stated (and proven) in terms of (real) $L^\infty$-algebras, but which has been restated in terms of abstract (real) tracial von Neumann algebras (`commutative probability spaces') by Tao in \cite{Tao12}.

As we mentioned in the Introduction, the manner in which the metatheorems were stated and proven here (using, as much as possible, a syntactic approach) is derived from the upcoming work \cite{CheLeuSipXX}, where an abstract metatheorem is proven which (hopefully) encompasses all the structures encountered so far in the proof mining literature.

In addition, some of the more theoretical possibilities of future research would be to investigate more other structures that need modifications of continuous logic (generalizing \cite[Section 6]{GunKoh16}; see also \cite{Cho16}), as well as the correspondence that arises in this context (again, as in \cite[Section 6]{GunKoh16}) between the uniform boundedness principle and model-theoretic ultraproducts, given that ultraproducts of tracial von Neumann algebras (and possibly other structures) have a somewhat different construction that the usual metric variant, using both the algebra norm $\|\cdot\|_\infty$ and the tracial norm $\|\cdot\|_2$ (for more details about this kind of `tracial' ultraproducts, see \cite{GeHad01, Gol12, FarHarShe14, Gol21, Gol22, GolHar}).

\section{Acknowledgements}

Liviu P\u aunescu was supported by a grant of the Romanian Ministry of Research, Innovation and Digitization, CNCS/CCCDI -- UEFISCDI, project number PN-III-P1-1.1-TE-2019-0262, within PNCDI III.

Andrei Sipo\c s was supported by a grant of the Romanian Ministry of Research, Innovation and Digitization, CNCS/CCCDI -- UEFISCDI, project number PN-III-P1-1.1-PD-2019-0396, within PNCDI III.


\begin{thebibliography}{99}

\bibitem{ArzPau15}
G. Arzhantseva, L. P\u aunescu,
Almost commuting permutations are near commuting permutations.
{\it J. Funct. Anal.} 269, no. 3, 745--757,
2015.

\bibitem{Bez85}
M. Bezem,
Strongly majorizable functionals of finite type: a model for bar recursion containing discontinuous functionals.
{\it J. Symb. Log.} 50, 652--660,
1985.

\bibitem{CheLeuSipXX}
H. Cheval, L. Leu\c stean, A. Sipo\c s,
Logical metatheorems for abstract structures.
Manuscript in preparation.

\bibitem{Cho16}
S. Cho,
An application of continuous logic to fixed point theory.
arXiv:1610.05397 [math.LO], 2016.

\bibitem{EngFer20}
P. Engr\'acia, F. Ferreira,
Bounded functional interpretation with an abstract type.
In: A. Rezu\c s (ed.), {\it Landscapes in Logic Vol. I. Contemporary Logic and Computing} (pp. 87--112).
College Publications,
London, 2020.

\bibitem{FarHarShe13}
I. Farah, B. Hart and D. Sherman,
Model theory of operator algebras I: stability.
{\it Bulletin of the London Mathematical Society} 45,
825--838,
2013.

\bibitem{FarHarShe14}
I. Farah, B. Hart, D. Sherman,
Model theory of operator algebras II: model theory.
{\it Israel J. Math.} 201, no. 1, 477--505,
2014.

\bibitem{FerOli05} 
F. Ferreira, P. Oliva, Bounded functional 
interpretation. {\it Ann. Pure Appl. Logic} 135, 73--112, 2005.

\bibitem{GeHad01}
L. Ge, D. Hadwin,
Ultraproducts of $C^*$-algebras.
In: L. K\'erchy, C. Foia\c s, I. Gohberg, H. Langer (eds.), {\it Recent advances in operator theory and related topics (Szeged, 1999)} (pp. 305--326),
Operator Theory: Advances and Applications,
Vol. 127, Birkh\"auser, Basel, 2001.

\bibitem{GerKoh08}
P. Gerhardy, U. Kohlenbach,
General logical metatheorems for functional analysis.
{\it Trans. Amer. Math. Soc.} 360, 2615--2660,
2008.

\bibitem{Gol12}
I. Goldbring,
{\it A gentle introduction to von Neumann algebras for model theorists}. Lecture notes, University of Illinois Chicago, 2012. Available online at: \url{https://www.math.uci.edu/~isaac/vNanotes.pdf}.

\bibitem{Gol21}
I. Goldbring,
The Connes Embedding Problem: A guided tour.
{\it Bulletin of the American Mathematical Society} 59, 503--560,
2022.

\bibitem{Gol22}
I. Goldbring,
Model theory and Ultrapower Embedding Problems in Operator Algebras.
arXiv:2210.01223 [math.LO], 2022.

\bibitem{GolHar}
I. Goldbring, B. Hart,
A survey on the model theory of tracial von Neumann algebras.
arXiv:2210.15508 [math.LO], 2022.

\bibitem{God58}
K. G\"odel,
\"Uber eine bisher noch nicht ben\"utze Erweiterung des finiten Standpunktes.
{\it Dialectica} 12, 280--287,
1958.

\bibitem{GunKoh16}
D. G\"unzel, U. Kohlenbach,
Logical metatheorems for abstract spaces axiomatized in positive bounded logic.
{\it Advances in Mathematics} 290, 503--551,
2016.

\bibitem{How73}
W. A. Howard, Hereditarily majorizable functionals of finite type. In: A. S. Troelstra (ed.),
{\it Metamathematical Investigation of Intuitionistic Arithmetic and Analysis} (pp. 454--461),
Lecture Notes in Mathematics 344, Springer, Berlin, 1973.

\bibitem{jimip}
Z. Ji, A. Natarajan, T. Vidick, J. Wright, H. Yuen,
$\mathsf{MIP^*=RE}$.
arXiv:2001.04383 [quant-ph], 2020. 

\bibitem{Koh96}
U. Kohlenbach,
Analysing proofs in analysis.
In: W. Hodges, M. Hyland, C. Steinhorn, and J. Truss (eds.), {\it Logic: From
Foundations to Applications} (pp. 225--260). Oxford University Press, Oxford, 1996.

\bibitem{Koh05}
U. Kohlenbach,
Some logical metatheorems with applications in functional analysis.
{\it Trans. Amer. Math. Soc.} 357, no. 1, 89--128,
2005.

\bibitem{Koh08}
U. Kohlenbach,
{\it Applied proof theory: Proof interpretations and their use in mathematics}.
Springer Monographs in Mathematics, Springer,
2008.

\bibitem{Koh19}
U. Kohlenbach,
Proof-theoretic methods in nonlinear analysis.
In: B. Sirakov, P. Ney de Souza, M. Viana (eds.), {\it Proceedings of the International Congress of Mathematicians 2018
(ICM 2018)}, Vol. 2 (pp. 61--82). 
World Scientific, 2019.

\bibitem{KohNic17}
U. Kohlenbach, A. Nicolae,
A proof-theoretic bound extraction theorem for CAT($\kappa$)-spaces.
{\it Studia Logica} 105, 611--624,
2017.

\bibitem{Kre51}
G. Kreisel,
On the interpretation of non-finitist proofs, part I.
{\it J. Symbolic Logic} 16, 241--267,
1951.

\bibitem{Kre52}
G. Kreisel,
On the interpretation of non-finitist proofs, part II: Interpretation of number theory, applications.
{\it J. Symbolic Logic} 17, 43--58,
1952.

\bibitem{Kur51}
S. Kuroda,
Intuitionistische Untersuchungen der formalistischen Logik.
{\it Nagoya Math.} 3, 35--47,
1951.

\bibitem{Leu06}
L. Leu\c stean,
Proof mining in $\R$-trees and hyperbolic spaces.
{\it Electronic Notes in Theoretical Computer Science} 165, 95--106,
2006.

\bibitem{Leu14}
L. Leu\c stean,
An application of proof mining to nonlinear iterations.
{\it Annals of Pure and Applied Logic} 165, 1484--1500,
2014.

\bibitem{Pis22}
N. Pischke,
Logical metatheorems for accretive and (generalized) monotone set-valued operators.
arXiv:2205.01788 [math.LO], 2022.

\bibitem{Sak56}
S. Sakai, A characterization of $W^*$-algebras. {\it Pacific J. Math.} 6, 763--773,
1956.

\bibitem{Sip17}
A. Sipo\c s,
Effective results on a fixed point algorithm for families of nonlinear mappings.
{\it Annals of Pure and Applied Logic} 168, 112--128, 2017.

\bibitem{Sip19}
A. Sipo\c s,
Proof mining in $L^p$ spaces.
{\it Journal of Symbolic Logic}, Volume 84, Issue 4, 1612--1629, 2019.

\bibitem{Tao12}
T. Tao,
Walsh's ergodic theorem, metastability, and external Cauchy convergence. Blog post, 2012. Available online at: \url{https://terrytao.wordpress.com/2012/10/25/walshs-ergodic-theorem-metastability-and-external-cauchy-convergence/}.

\bibitem{Wal12}
M. Walsh,
Norm convergence of nilpotent ergodic averages.
{\it Ann. of Math.} 175, no. 3,
1667--1688,
2012.

\end{thebibliography}
\end{document}